\newtheorem{thm}{Theorem}[section]
\newtheorem{cor}[thm]{Corollary}
\newtheorem{lem}[thm]{Lemma}
\theoremstyle{definition}
\theoremstyle{remark}
\newtheorem{rem}[thm]{Remark}
\newcommand{\ls}{\lesssim}
\def \T{\mathrm{T}}
\def \P{\mathrm{P}}
\def \M{\mathrm{M}}
\def \S{\mathrm{S}}
\def \I{\mathrm{I}}
\def \bi{\dot{B}^{-1}_{2,\infty}}
\def \b{\dot{B}^{1}_{2,1}}
\def \Z{\mathbb{Z}}
\def \D{\Omega}
\def\R{\mathbb{R}}
\def\2L{\Lambda_{\tilde{\gamma}}}
\def\1L{\Lambda_{\gamma}}
\def \d{\partial_z}
\def \dc{\partial_{\overline z}}
\def \la{\lambda}
\renewcommand{\le}{\leqslant}
\renewcommand{\ge}{\geqslant}
\renewcommand{\leq}{\leqslant}
\renewcommand{\geq}{\geqslant}
\begin{document}

\title[Unbounded potential recovery in the plane]{Unbounded potential recovery in the plane}

\thanks{
Mathematics Subject Classification. Primary 35P25, 45Q05; Secondary  42B37,     35J10}
\thanks{Supported by the ERC grant 307179 and the MINECO grants MTM2011-28198, MTM2013-41780 and SEV-2011-0087 (Spain)}
\author{Kari Astala}
\address{Department of mathematics and statistics,
Po.Box 68, 00014, University of Helsinki, Finland} \email{kari.astala@helsinki.fi}
\author{Daniel Faraco}
\address{Departamento de Matem\'aticas - Universidad Aut\'onoma de Madrid
 and Instituto de Ciencias Matem\'aticas
CSIC-UAM-UC3M-UCM, 28049 Madrid, Spain} \email{daniel.faraco@uam.es}
\author{Keith M. Rogers}
\address{Instituto de Ciencias Matem\'aticas CSIC-UAM-UC3M-UCM, 28049 Madrid, Spain} \email{keith.rogers@icmat.es}
\dedicatory{Dedicated to the memory of Tuulikki}

\maketitle

\begin{abstract} We reconstruct compactly supported potentials with only half 
a derivative in $L^2$ from the scattering amplitude at a fixed energy. For this we draw a connection between the recently introduced method of Bukhgeim, which uniquely determined the potential from the Dirichlet-to-Neumann map,  and a question of Carleson regarding the convergence to
initial data of solutions to time-dependent Schr\"odinger equations.  We also provide examples of compactly supported potentials, with $s$ derivatives in $L^2$ for any $s<1/2$, which cannot 
be recovered  by these means. Thus the recovery method has a different threshold in terms of regularity than the corresponding uniqueness result.
\end{abstract}

\vspace{1em}

\section{Introduction}

We  consider the  Schr\"odinger equation $\Delta u=
V u$ on a bounded domain~$\D$ in the plane. For each solution $u$,  we are given the value of both $u$ and $\nabla u \cdot n$ on the boundary $\partial\D$,
where $n$ is the exterior unit normal on $\partial\D$. The goal is
then to recover the potential $V$ from this information.

We suppose throughout that $V\in L^2$ is supported on $\Omega$ and that $0$ is not a
Dirichlet eigenvalue for the Hamiltonian $-\Delta +V$.  Then for each
 $f\in H^{1/2}(\partial \Omega)$, there is a unique solution $u\in H^1(\Omega)$ to the Dirichlet problem
  \begin{equation}\label{dp}
\begin{cases}
\Delta u=Vu\\
u\big|_{\partial \Omega}=f,
\end{cases}
\end{equation}
and the Dirichlet-to-Neumann ({\small DN}) 
map $\Lambda_V$ can be formally defined by $$\Lambda_V\,:\,f\mapsto
\nabla u \cdot n|_{\partial\Omega}.$$
Then a restatement of our goal is to
recover  $V$ from knowledge of~$\Lambda_V$.

We come to this problem via a question of Calder\'on regarding impedance tomography~\cite{Calderon80}, where $f$ is the electric potential and $\nabla u \cdot n$ is the boundary current, however the {\small DN} map $\Lambda_{V-\kappa^2}$ and the scattering amplitude at energy~$\kappa^2$ are uniquely determined by each other, and indeed the {\small DN} map can be recovered from
the scattering  amplitude (see the appendix for explicit formulae). Thus we are also addressing the question of whether it is possible to recover a potential from the scattering data at a fixed positive energy.

In higher dimensions, Sylvester and Uhlmann  proved that smooth potentials  are uniquely determined by the {\small DN} map \cite{SylvesterUhlman87} (see \cite{NSU88,Novikov88,Ch} for nonsmooth potentials and \cite{BrownTorres03,PPU03,HabermanTatarupp} for the conductivity problem). The uniqueness result was extended to a reconstruction procedure by Nachman \cite{Nachman88,Nachman91}. The planar case is quite different mathematically as it is not overdetermined. Here the first uniqueness and reconstruction algorithm was proved by Nachman \cite{Nachman96}   via $\overline{\partial}$-methods for potentials of conductivity type (see also~\cite{BrownUhlman97} for uniqueness with less regularity). Sun  and Uhlmann~\cite{SU91,SU93} proved uniqueness for potentials satisfying nearness conditions to each other. Isakov and Nachman~\cite{IN} then reconstructed the real valued $L^{p}$-potentials,  $p>1$, in the case that their eigenvalues are strictly positive.  The $\overline{\partial}$-method in combination with the theory of quasiconformal maps gave the uniqueness result for the conductivity equation with measurable coefficients
\cite{AstalaPaivarinta06}.  The problem for the general Schr\"odinger equation was  solved only  in 2008 by Bukhgeim \cite{B} for $C^1$-potentials.
Bukhgeim's result has since been improved and extended to treat related inverse  problems (see for example \cite{Blastenpp, GST, GT0, GT, NS, IUY10, IY}).

The aim of this article is to emphasise a surprising connection between
the  pioneering work of Bukhgeim \cite{B} and Carleson's question
\cite{C} regarding the convergence to initial data of solutions to time-dependent
Schr\"odinger equations.  Elaborating
on this new point of view we obtain a reconstruction theorem for
general planar potentials with only half a derivative in $L^2$, which is
sharp with respect to the regularity.
The precise statements
are given in the forthcoming Corollary~\ref{dim} and Theorem~\ref{sharpness}.

To describe the results in more detail, we recall that the starting point in~\cite{B} was  to consider  solutions to $\Delta u=V u$
 of the form
$u = e^{i{\psi}}\big(1+w\big)$, where from now on
$$\psi(z)\equiv \psi_{k,x}(z)=\tfrac{k}{8}(z-x)^2,\qquad z\in \mathbb{C},\ \ x\in\Omega.$$
Solutions of this type have a long history (see for example \cite{F, SylvesterUhlman87, KSU, D}), and in this form they were considered first by Bukhgeim.
We will recover the potential by measuring a countable number of times  on the boundary, so we take $k\in \mathbb{N}$.
We will require the homogeneous Sobolev spaces with norm given by $\|f\|_{\dot{H}^s}=\|(-\Delta)^{s/2}f\|_{L^2},$ where
 $(-\Delta)^{s/2}$ is defined via the Fourier transform as usual.
In Section~\ref{rem}, we prove that if the potential $V$ is
contained in $\dot{H}^s$ with
$0<s<1$, and $k$ is sufficiently large, then we can take $w\equiv w_{k,x}\in
\dot{H}^s$ with a bound for the norms which is decreasing to zero in~$k$.
We write $u_{k,x}=e^{i{\psi}}\big(1+w\big)$ for these
$w\in \dot{H}^s$.

The definition of the {\small DN} map yields  the basic integral formula in
inverse problems; Alessandrini's identity. Indeed,
 if
 $u, v\in H^1(\D)$ satisfy $\Delta u=Vu$ and $\Delta v=0$, then the formula states that
\begin{equation*}\label{Alessandrini}
\Big\langle (\Lambda_V-\Lambda_0)[u],v
\Big\rangle:=\int_{\partial\Omega} (\Lambda_V-\Lambda_0)[u]\, v=
\int_\D  V u \,v.
\end{equation*}
Taking $u=u_{k,x}$, which is also in $H^1(\Omega)$,  and $v=e^{i\overline{{\psi}}}$ this yields
\begin{equation}\label{ale} \Big\langle
(\Lambda_V-\Lambda_0)[u_{k,x}],e^{i{\overline{\psi}}}
\,\Big\rangle=\int_{\D} e^{i({\psi}+\overline{{\psi}})}V(1+w)\,,
\end{equation}
and so the integral over $\Omega$ can be obtained from information on the boundary.

The bulk of the article is concerned with recovering the potential
from the integral on the right-hand side of \eqref{ale}. However, in
order to calculate the value of the integral, without knowing the
value of the potential $V$ inside~$\D$, we need to calculate the
value of the left-hand side of \eqref{ale}. That is to say, we must
determine the values of $u_{k,x}$ on the boundary  from the {\small DN} map.
In the case of linear phase, this was achieved by Nachman~\cite{Nachman96}  for $L^p$-potentials $V$, with $p>1$, and Lipschitz boundary. For $C^1$-potentials, with $C^2$-boundary, the result was extended by Novikov and Santacesaria  to quadratic phases~\cite{NS}. Here we show that for quadratic phases almost no regularity is needed. We consider potentials in the inhomogeneous $L^2$-Sobolev space $H^s$, defined as before with $(-\Delta)^{s/2}$ replaced by $(\I-\Delta)^{s/2}$. Our starting point is similar to \cite{Nachman96} but we give a shorter argument,  avoiding single layer potentials.

\begin{thm}\label{far} Let $V \in H^{s}$ with $s>0$ and suppose that $\Omega$ is Lipschitz. Then we can identify compact operators $\Gamma_{\!k,x}: H^{1/2}(\partial \Omega)\to H^{1/2}(\partial \Omega)$, depending only on $k, x$ and $\Lambda_V-\Lambda_0$, such that
\[u_{k,x}{|_{\partial \D}} =(\mathrm{I}-\Gamma_{\!k,x})^{-1}\big[e^{i{\psi}}{|_{\partial \D}}\big]. \]
\end{thm}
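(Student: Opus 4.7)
The plan is to derive a boundary integral equation for $f:=u_{k,x}|_{\partial\D}$ with right-hand side $e^{i\psi}|_{\partial\D}$, and then to invert it. Extending $V$ by zero outside $\D$, the CGO solution $u_{k,x}=e^{i\psi}(1+w)$ constructed in Section~\ref{rem} satisfies the global Lippmann--Schwinger identity
$$u_{k,x}(z) = e^{i\psi(z)} + \int_{\D}\Phi(z-y)\,V(y)\,u_{k,x}(y)\,dy,\qquad z\in\R^{2},$$
with $\Phi(z)=\tfrac{1}{2\pi}\log|z|$ the fundamental solution of the Laplacian. For each $z\in\R^{2}\setminus\c\D$ the test function $y\mapsto\Phi(z-y)$ is harmonic on $\D$, so the version of Alessandrini's identity stated just before \eqref{ale}, applied with this harmonic $v$, converts the volume integral into a pairing on the boundary, giving
$$u_{k,x}(z)-e^{i\psi(z)} = \Big\langle(\Lambda_V-\Lambda_{0})f,\ \Phi(z-\cdot)\big|_{\partial\D}\Big\rangle,\qquad z\in\R^{2}\setminus\c\D.$$

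The right-hand side extends continuously as $z\to\partial\D$, which is what makes it unnecessary to invoke any jump relations for single-layer potentials. Letting $T$ denote the resulting bounded operator on $H^{1/2}(\partial\D)$ produced by taking the boundary trace of $z\mapsto\langle(\Lambda_V-\Lambda_{0})\,\cdot\,,\Phi(z-\cdot)|_{\partial\D}\rangle$, the identity at the boundary becomes
$$f = e^{i\psi}\big|_{\partial\D} + Tf,$$
so it suffices to set $R_k:=(I-T)^{-1}$ to obtain the required operator, which is determined by $\Lambda_V-\Lambda_{0}$ alone.

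The main task is thus to prove that $I-T$ is invertible on $H^{1/2}(\partial\D)$. Boundedness of $T$ will come from the standard mapping $\Lambda_V-\Lambda_{0}:H^{1/2}\to H^{-1/2}$ and the $H^{-1/2}\to H^{1/2}$ boundedness of the logarithmic boundary kernel on Lipschitz domains. Compactness should follow from the extra smoothing of $\Lambda_V-\Lambda_{0}$ supplied by $V\in H^{s}$ with $s>0$, which pushes $T$ into $H^{1/2+\sigma}(\partial\D)$ for some $\sigma>0$ so that Rellich's theorem on the compact manifold $\partial\D$ applies. The Fredholm alternative then reduces invertibility to injectivity of $I-T$: given $\tilde f$ with $(I-T)\tilde f=0$, one should glue the Schr\"odinger solution on $\D$ with trace $\tilde f$ to the exterior function given by the same pairing, check that the Cauchy data match across $\partial\D$, analyse the behaviour at infinity, and use that $0$ is not a Dirichlet eigenvalue of $-\Delta+V$ to force $\tilde f\equiv 0$. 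Making this uniqueness step rigorous under the rough hypothesis $V\in H^{s}$ is the main technical obstacle of the argument.
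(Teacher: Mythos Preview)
Your proposal contains a genuine gap at the very first step: the Lippmann--Schwinger identity
\[
u_{k,x}(z) = e^{i\psi(z)} + \int_{\D}\Phi(z-y)\,V(y)\,u_{k,x}(y)\,dy
\]
with the \emph{standard} logarithmic fundamental solution $\Phi$ is false for the Bukhgeim solutions of Section~\ref{rem}. The difference $u_{k,x}-e^{i\psi}=e^{i\psi}w$ grows exponentially in the directions where $\Im\psi<0$, whereas the Newton potential of the compactly supported function $Vu_{k,x}$ has at most logarithmic growth at infinity. Both sides solve $\Delta(\cdot)=Vu_{k,x}$ on $\R^2$, but they differ by a nonzero entire harmonic function, so the equality cannot hold. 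This is precisely the obstruction that forces the use of Faddeev--type Green's functions in CGO constructions: the paper replaces $\Phi$ by the conjugated kernel $G_\psi(z,\eta)=e^{i\psi(z)}g_\psi(z,\eta)e^{-i\psi(\eta)}$, where $g_\psi$ is the kernel of $\S^k_1$, and it is \emph{this} kernel for which the identity $\int_\D G_\psi(\cdot,\eta)V(\eta)u_{k,x}(\eta)\,d\eta=u_{k,x}-e^{i\psi}$ holds (see~\eqref{its3}), by the very definition of $w$ as a fixed point of $\S^k_V$.

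A second, independent divergence from the paper is your injectivity strategy. You propose the classical Nachman route (glue interior/exterior, match Cauchy data, analyse the behaviour at infinity, invoke the eigenvalue assumption); the paper instead exploits what it has already built in Section~\ref{rem}. If $\Gamma_{\!\psi}[f]=f$, one shows the associated $\rho=\S^k_V[e^{-i\psi}P_V(f)]$ satisfies $\rho=\S^k_V[\rho]$, and then the contractivity of $\S^k_V$ on $\dot H^s$ for large $k$ (Lemma~\ref{celtic3}) forces $\rho=0$, hence $f=0$. This bypasses any asymptotic analysis and is where the $H^s$ hypothesis with $s>0$ actually enters.
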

For $C^1$-potentials, Bukhgeim \cite{B} proved that the right-hand side
of \eqref{ale}, multiplied by $(4\pi)^{-1}k$,  converges to $V(x)$ for all $x\in \D$, when $k$ tends to infinity. In
Section~\ref{recovery}, we obtain this convergence for
potentials in $H^s$ with $s>1$. For discontinuous potentials we are no longer able to recover at each point. Instead we bound the fractal dimension of the sets where the recovery fails. As Sobolev spaces are only defined modulo sets of zero Lebesgue measure, we consider first the potential spaces~$L^{s,2}=(-\Delta)^{-s/2}L^2(\R^2)$, and bound
 the Hausdorff dimension of the points where the recovery fails.

\begin{thm}\label{dim2}
 Let $V \in  L^{s,2}$ with $1/2\le s< 1$. Then
\[ \dim_H\Big\{x\in\D\, :\, \tfrac{k}{4\pi} \Big\langle (\Lambda_V-\Lambda_0)[u_{k,x}], e^{i{\overline{\psi}}}\, \Big\rangle\not\to V(x)\ \ \text{as}\ \ k\to\infty\Big\}\le 2-s. \]\end{thm}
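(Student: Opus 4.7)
The plan is to use Alessandrini's identity to reduce the statement to a pointwise convergence question for a hyperbolic Schr\"odinger-type evolution, and then to extract the dimension bound from Carleson-type maximal estimates. By~\eqref{ale}, the quantity of interest equals $I_k(x) + II_k(x)$, where $I_k(x) = \frac{k}{4\pi i}\int_\Omega e^{i(\psi+\overline\psi)} V\,dz$ is the main term and $II_k(x) = \frac{k}{4\pi i}\int_\Omega e^{i(\psi+\overline\psi)} Vw\,dz$ is the error involving $w=w_{k,x}$. The combined phase is $\psi+\overline\psi = \frac{k}{4}\big((z_1-x_1)^2-(z_2-x_2)^2\big)$, so the oscillatory exponential factors as $e^{i\frac{k}{4}(z_1-x_1)^2}e^{-i\frac{k}{4}(z_2-x_2)^2}$; after rescaling $z-x = 2\eta/\sqrt{k}$,
\[I_k(x)=\frac{1}{\pi i}\int_{\R^2} e^{i(\eta_1^2-\eta_2^2)}\, V\big(x+2\eta/\sqrt{k}\big)\, d\eta,\]
which is, up to a constant, the composition $T^1_{1/k}\,T^2_{1/k} V$ of a forward 1D Schr\"odinger evolution in the $z_1$-variable with a backward one in the $z_2$-variable, evaluated at~$x$. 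The convergence $I_k(x)\to V(x)$ is therefore a Carleson-type convergence problem, realising the connection promised in the introduction.

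For the dimension bound on the main term, I would split
\[I_k V - V = T^1_{1/k}\big(T^2_{1/k} V - V\big) + \big(T^1_{1/k} V - V\big),\]
and treat each summand by a sharp 1D weighted Schr\"odinger maximal estimate against Frostman measures. Combining this with a slicing/Fubini argument on the 2D regularity $V\in L^{s,2}$, the 1D Carleson-type divergence set (of dimension $\le 1-s$ in the corresponding coordinate) is free in the other coordinate, giving a 2D divergence set of dimension at most $(1-s)+1=2-s$; approximation of $V$ by smooth functions, for which the convergence is elementary, then converts the maximal bound into the pointwise statement. The error term $II_k$ is controlled in the same framework: for $s\ge 1/2$ the Sobolev multiplication rule gives $Vw\in L^{2}_{\mathrm{loc}}$ with norm bounded by $\|V\|_{L^{s,2}}\|w\|_{\dot H^s}$, and the $L^2$-boundedness of the Schr\"odinger semigroup combined with the decay $\|w_{k,x}\|_{\dot H^s}\to 0$ announced for the forthcoming Section~\ref{rem} ensures that the divergence set of $II_k$ has dimension at most $2-s$ as well.

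The main obstacle is the weighted maximal estimate itself: one needs 1D Schr\"odinger maximal bounds against Frostman measures with precisely the right relationship between measure dimension and Sobolev exponent, and then a passage from 2D Sobolev regularity to slicewise 1D estimates without losing the sharp exponent at the borderline $s=1/2$. The tensor product factorization $e^{i(\eta_1^2-\eta_2^2)}=e^{i\eta_1^2}e^{-i\eta_2^2}$ of the oscillatory kernel is what makes this reduction to classical Carleson-type estimates possible, and the exponent $2-s$ in the conclusion is precisely the 1D divergence-set dimension $1-s$ plus one extra free dimension inherited from the slicing.
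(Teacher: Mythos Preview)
Your overall architecture --- Alessandrini's identity, the split into a main oscillatory term and a remainder involving $w$, and the identification of the main term with a nonelliptic Schr\"odinger evolution --- matches the paper exactly. The remainder is actually easier than you suggest: in the paper (Theorem~\ref{Remainder}) one shows $\sup_{x\in\Omega}|\T^k_w V(x)|\le Ck^{-\la}\|V\|_{\dot H^s}^2$, so $II_k(x)\to 0$ for \emph{every} $x\in\Omega$ and contributes nothing to the divergence set. Your treatment of $II_k$ via Sobolev multiplication and $L^2$-boundedness of the flow is both more complicated and vague about the $x$-dependence of $w_{k,x}$, which prevents one from viewing $II_k$ as an honest Schr\"odinger evolution.

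The real gap is in your treatment of the main term. The splitting $I_kV-V=T^1_{1/k}(T^2_{1/k}V-V)+(T^1_{1/k}V-V)$ and the slicing/Fubini plan does not yield the exponent $2-s$. First, membership $V\in L^{s,2}(\R^2)$ gives slices $V(\cdot,x_2)$ only in $H^{s-1/2}(\R)$ (trace loss), which for $1/2\le s<3/4$ is below the one-dimensional Carleson threshold $1/4$, so you do not even get a.e.\ convergence slicewise. Second, even granting $H^\sigma(\R)$ regularity on slices, the one-dimensional divergence-set bound is $1-2\sigma$, not $1-\sigma$; your claimed ``$1-s$ in the corresponding coordinate'' does not follow from any standard one-dimensional estimate. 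Third, the first summand $T^1_{1/k}(T^2_{1/k}V-V)$ feeds a $k$-dependent input into the maximal operator, so this is not covered by a fixed-data Carleson estimate at all.

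The paper avoids all of this by proving a genuinely two-dimensional maximal inequality (Theorem~\ref{it}) against $\alpha$-dimensional measures for $\alpha>2-s$, via the Kolmogorov--Seliverstov--Plessner linearisation. The tensor structure is exploited not at the operator level but inside the dual formulation: after Cauchy--Schwarz one bounds $|\xi|^{-2s}\le|\xi_1|^{-s}|\xi_2|^{-s}$ and applies Sj\"olin's one-dimensional lemma (Lemma~\ref{lem2}) separately in $\xi_1$ and $\xi_2$ to obtain a kernel $\prod_j|x_j-y_j|^{-(1-s)}$. The resulting energy $\iint\prod_j|x_j-y_j|^{-(1-s)}\,d\mu(x)d\mu(y)$ is then controlled by a dyadic-rectangle decomposition adapted to the axis-parallel singularities; this geometric step is the substantive new ingredient and is where the threshold $\alpha>2-s$ emerges. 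Frostman's lemma then converts the maximal estimate into the dimension bound.
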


As the members of $H^s$ coincide almost everywhere
with members of~$L^{s,2}$, we see that rough and unbounded
potentials can be recovered almost
everywhere
 from information on the boundary.
Note that these results are stable in the sense that $k\in\mathbb{N}$ can be replaced by any sequence $\{n_k\}_{k\in\mathbb{N}}$ such that $n_k$ tends to infinity as $k$ tends to infinity.

\begin{cor}\label{dim} Let $V \in  {H}^{1/2}$. Then
\[ \lim_{k\to \infty} \tfrac{k}{4\pi} \Big\langle (\Lambda_V-\Lambda_0)[u_{k,x}], e^{i{\overline{\psi}}}\, \Big\rangle= V(x),\quad \emph{a.e.}\ x\in \D. \]
\end{cor}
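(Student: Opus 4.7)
The plan is to deduce Corollary \ref{dim} directly from Theorem \ref{dim2} by recognising that the only gap between the two statements is the distinction between the inhomogeneous Sobolev space $H^{s}$ (defined via the Bessel potential $(\I-\Delta)^{s/2}$) and the potential space $L^{s,2}$ (defined via $(-\Delta)^{s/2}$), together with a passage from Hausdorff dimension to Lebesgue measure.

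First, I would fix $V\in H^{1/2}$ and, using the remark made in the paper immediately after Theorem \ref{dim2}, select a representative $\widetilde V\in L^{1/2,2}$ of the same $L^2$-equivalence class. The justification is essentially this: the multipliers $(\I-\Delta)^{1/4}$ and $(-\Delta)^{1/4}$ differ only by a zeroth-order-type correction that is locally harmless; since $V$ is supported in the bounded set $\D$ and its zero extension lies in $H^{1/2}(\R^2)$, one shows by a direct Fourier computation that the zero extension of $V$ can be identified a.e.\ with an element of $L^{1/2,2}$. In particular $\widetilde V = V$ almost everywhere in $\D$, and hence $\Lambda_V=\Lambda_{\widetilde V}$ and the special solutions $u_{k,x}$ in \eqref{ale} are unchanged.

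Next, I would apply Theorem \ref{dim2} to $\widetilde V$ with exponent $s=1/2$, which lies in the admissible range $1/2\le s<1$. The theorem produces an exceptional set
\[
E=\Big\{x\in\D\, :\, \tfrac{k}{4\pi i} \Big\langle (\Lambda_V-\Lambda_0)[u_{k,x}], e^{i{\overline{\psi}}}\, \Big\rangle\not\to \widetilde V(x)\ \text{as}\ k\to\infty\Big\}
\]
satisfying $\dim_H E\le 2-\tfrac12=\tfrac32$. Since any planar set of Hausdorff dimension strictly less than $2$ is Lebesgue-null, we get $|E|=0$, and combining this with $V=\widetilde V$ a.e.\ in $\D$ yields the desired pointwise convergence to $V(x)$ for almost every $x\in\D$.

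I do not anticipate a real obstacle here beyond spelling out carefully the identification $H^{1/2}\subset L^{1/2,2}$ modulo null sets for the (zero-extended) compactly supported potential; the rest is a routine measure-theoretic translation of the dimensional bound in Theorem \ref{dim2} into an almost-everywhere statement, together with the invariance of the left-hand side of \eqref{ale} under modification of $V$ on a set of measure zero.
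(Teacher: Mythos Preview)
Your proposal is correct and follows exactly the route the paper takes: identify $V\in H^{1/2}$ almost everywhere with an element of $L^{1/2,2}$, apply Theorem~\ref{dim2} at $s=1/2$ to get a divergence set of Hausdorff dimension at most $3/2<2$, and conclude that this set has Lebesgue measure zero. The paper compresses this into the single sentence preceding the corollary, but the content is identical.
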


  In Section~\ref{sharp}, we will prove that this is sharp in the
 sense of the following theorem. Note that even though there is divergence on a set of full Hausdorff dimension when $s<1/2$, the dimension of the divergence set is bounded above by $3/2$ when $s\ge 1/2$.

\begin{thm}\label{sharpness} Let $s<1/2$. Then there exists a potential $V\in  {H}^s$,  supported in $\Omega$, for which
\[ \Big|\Big\{ x \in\D\,:\, \tfrac{k}{4\pi} \Big\langle (\Lambda_{V}-\Lambda_0)[u_{k,x}], e^{i{\overline{\psi}}}\, \Big\rangle\not\to V(x)\ \ \text{as}\ \ k\to\infty\Big\}\Big| \neq 0.
\]
\end{thm}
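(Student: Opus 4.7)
The plan is to reduce the sharpness claim to a divergence counterexample for the non-elliptic (hyperbolic) Schr\"odinger propagator in the plane, at regularity $s<1/2$.

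First, by Alessandrini's identity \eqref{ale},
\[ \tfrac{k}{4\pi i}\Big\langle (\Lambda_{V}-\Lambda_0)[u_{k,x}], e^{i\overline{\psi}}\Big\rangle = \tfrac{k}{4\pi i}\int_{\D} e^{i(\psi+\overline{\psi})}V\bigl(1+w_{k,x}\bigr). \]
The phase satisfies $\psi+\overline{\psi}=\tfrac{k}{4}\bigl((z_1-x_1)^2-(z_2-x_2)^2\bigr)$, a non-degenerate quadratic form of signature $(1,1)$. The $\dot H^s$-bound on $w_{k,x}$ from Section~\ref{rem} decays in $k$, and a duality/Sobolev argument shows that the perturbative contribution involving $Vw_{k,x}$ is $o(1)$ uniformly in $x\in\D$. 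It therefore suffices to produce $V\in H^s$, with $s<1/2$, supported in $\D$, for which
\[ T_kV(x):=\tfrac{k}{4\pi i}\int_{\R^2} e^{i\frac{k}{4}[(z_1-x_1)^2-(z_2-x_2)^2]}V(z)\,dz \]
fails to converge to $V(x)$ on a set of positive Lebesgue measure as $k\to\infty$.

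Next, I would identify $T_k$ with a Schr\"odinger-type propagator. Inserting the Fourier inversion formula for $V$ and evaluating the two one-dimensional Fresnel integrals in $z_1$ and $z_2$ (with opposite signs coming from the signature $(1,1)$) gives, up to a fixed unimodular constant,
\[ T_kV(x)=c\,\bigl(e^{-i(\partial_1^2-\partial_2^2)/k}V\bigr)(x). \]
Thus convergence $T_kV(x)\to V(x)$ as $k\to\infty$ is exactly the Carleson problem of pointwise convergence to the initial datum at times $t=1/k\to 0^+$ for the non-elliptic Schr\"odinger equation $i\partial_t u=(\partial_1^2-\partial_2^2)u$.

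For the counterexample I would invoke (or construct) the known divergence results for this hyperbolic propagator at any $s<1/2$. Such counterexamples are produced by Bourgain-style Knapp constructions adapted to the null cone $\{\xi_1=\pm\xi_2\}$ of the symbol: a lacunary superposition of modulated characteristic functions of thin rectangles, oriented along the null directions, at dyadic frequencies $N_j$ with amplitudes of size $N_j^{-s}$, yields $V\in H^s$ for which $|T_{k_j}V(x)|$ is bounded below on a positive-measure subset of a fixed ball for a sequence $k_j\to\infty$ in $\mathbb{N}$. Multiplying by a smooth cutoff compactly supported in $\D$ preserves both the $H^s$-membership and, up to an arbitrarily small error in measure, the divergence.

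The main obstacle is this last step: one must organise the Knapp packets so that the divergence is detected on a set of \emph{positive} Lebesgue measure, rather than merely accumulating near isolated points, and along the discrete sequence $k\in\mathbb{N}$ of parameters on which our integrals are evaluated. The hyperbolic geometry provides more tangential directions than the paraboloid associated with the classical elliptic Carleson problem, which is ultimately why the threshold $s=1/2$ is sharp in this non-elliptic setting, whereas the corresponding sharp threshold for the elliptic equation in two dimensions remains subtle.
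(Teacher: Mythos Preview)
Your reduction is exactly the one the paper carries out: Alessandrini's identity plus Theorem~\ref{Remainder} kills the $w$-term uniformly in $x$, and the identification of the main term $T_k V$ with the time-$1/k$ non-elliptic Schr\"odinger evolution $e^{i\frac{1}{k}\Box}V$ is the operative observation. So the architecture of your proposal matches the paper.

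The gap is in the counterexample step, which you correctly flag as ``the main obstacle'' but then propose to handle in a way that does not quite work. You suggest invoking the known divergence datum from \cite{RVV} and then multiplying by a smooth cutoff to force support in $\Omega$. The paper explicitly warns that this is where the difficulty lies: the \cite{RVV} datum is (i) not real, (ii) not compactly supported, and (iii) exhibits divergence along a time sequence that depends on the spatial point $x$. Your cutoff claim---that multiplying by a smooth bump preserves the divergence up to small measure---is not justified, because the propagator $e^{it\Box}$ is nonlocal: there is no reason $e^{i\frac{1}{k}\Box}\big((1-\chi)V\big)$ should be controlled on the divergence set, and in fact the tails of the \cite{RVV} datum contribute nontrivially to the blow-up. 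The paper therefore does \emph{not} cut off an existing example; it rebuilds the construction from scratch with a compactly supported bump $\phi$ in both spatial variables, taking
\[
V(x)=\sum_{j\ge 2} 2^{(1-\beta)j+1}\cos(2^j x_2)\,\phi(2^j x_1)\,\phi(x_2),\qquad \beta\in(1/2+s,1),
\]
and argues ``on the frequency and spatial side simultaneously'' to show that for $k$ the nearest integer to $2^{j+1}/x_1$ one has $|e^{i\frac{1}{k}\Box}V_j^+(x)|\gtrsim 2^{(1-\beta)j}$ on a fixed rectangle, while all other pieces sum to $O(2^{\epsilon j})$. This simultaneously handles compact support, reality, and the requirement that the divergence be witnessed along $k\in\mathbb{N}$ (indeed, along any sequence $n_k\in[k,k+1)$), on a set of positive measure independent of $j$.

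In short: your outline is right, but the sentence about cutting off is a genuine hole; the paper's point is precisely that one must construct the compactly supported, real-valued datum directly.
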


Bl{\aa}sten \cite{Blastenpp} proved that potentials in $H^s$ with $s>0$ are uniquely determined by the {\small DN} map (see also \cite{IY} for uniqueness with $L^p$-potentials, $p>2$).  It is a curious phenomenon that, within the Bukhgeim approach, uniqueness and reconstruction have different smoothness barriers.

The {\small DN} map $\Lambda_{V-\kappa^2}$ can be recovered from the scattering amplitude at a fixed energy $\kappa^2>0$ (see the appendix),
  from which we are able to recover the potential $V-\kappa^2\chi_\Omega$ rather than $V$. We are free to choose the domain~$\Omega$. Taking $\Omega$ to be a square, we obtain the following recovery formula. Here $U_{k,x}$ are Bukhgeim solutions which solve $\Delta u=
(V -\kappa^2)u$ in $\Omega$.

\begin{thm}\label{names}
Let $V \in  {H}^{1/2}$  be supported in a square $\Omega$. Then
\[ \lim_{k\to \infty} \tfrac{k}{4\pi} \Big\langle (\Lambda_{V-\kappa^2}-\Lambda_0)[U_{k,x}], e^{i{\overline{\psi}}}\, \Big\rangle+\kappa^2= V(x),\quad \emph{a.e.}\ x\in \D. \]
\end{thm}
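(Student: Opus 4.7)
The plan is to apply Alessandrini's identity with the Bukhgeim solution $U_{k,x}=e^{i\psi}(1+W)$ to the shifted potential $V-\kappa^2$, reducing the claim to (the proof of) Corollary~\ref{dim} for the $V$-part and to an explicit stationary phase computation for the constant part. First, the identity gives
\[
\Big\langle(\Lambda_{V-\kappa^2}-\Lambda_0)[U_{k,x}],e^{i\bar\psi}\,\Big\rangle
=\int_\D(V-\kappa^2)\,e^{i(\psi+\bar\psi)}(1+W)
=I_1-\kappa^2 I_2,
\]
where $I_1=\int_\D V\,e^{i(\psi+\bar\psi)}(1+W)$ and $I_2=\int_\D e^{i(\psi+\bar\psi)}(1+W)$. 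Here $W$ is the CGO remainder associated with $V-\kappa^2$; since the zero extension of $V-\kappa^2\chi_\D$ belongs to $\dot{H}^s$ for every $s<1/2$, the construction of Section~\ref{rem} still provides a $W$ with $\|W\|_{\dot{H}^s}\to 0$ as $k\to\infty$.

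For the first term I will argue that $\tfrac{k}{4\pi i}I_1\to V(x)$ for a.e.~$x\in\D$. Formally this is Corollary~\ref{dim}, except that $W$ is now associated with $V-\kappa^2$ rather than with $V$. The proof of the corollary, however, decouples $V$ and $W$: the pure oscillatory piece $\int_\D V\,e^{i(\psi+\bar\psi)}$ is controlled by a Carleson-type maximal estimate in $\|V\|_{H^{1/2}}$, while the cross term $\int_\D V\,e^{i(\psi+\bar\psi)}W$ is handled by a bilinear oscillatory bound that depends on $V$ and $W$ only through $\|V\|_{H^{1/2}}$ and $\|W\|_{\dot{H}^s}$. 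Both estimates apply verbatim here.

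For the constant part the assumption that $\D$ is a square enters. Writing $z-x=u+iv$, the phase reduces to $\psi+\bar\psi=\frac{k}{4}(u^2-v^2)$ and, since the translate $\D-x$ is an axis-aligned rectangle, $\int_\D e^{i(\psi+\bar\psi)}$ factorises as a product of two one-dimensional Fresnel integrals on intervals. For $x$ in the interior of $\D$ both intervals contain the origin, so classical Fresnel asymptotics yield $\tfrac{k}{4\pi i}\int_\D e^{i(\psi+\bar\psi)}\to 1$. The remaining cross term $\int_\D e^{i(\psi+\bar\psi)}W$ is $o(k^{-1})$ by the same bilinear oscillatory estimate invoked above, now applied with the symbol~$1$ in place of $V$. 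Since $|\partial\D|=0$, this gives $\tfrac{k}{4\pi i}I_2\to 1$ for a.e.~$x\in\D$.

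Combining the two limits we obtain $\tfrac{k}{4\pi i}(I_1-\kappa^2 I_2)+\kappa^2\to V(x)-\kappa^2+\kappa^2=V(x)$ for a.e.~$x\in\D$. The main obstacle will be the first step, namely verifying that the proof of Corollary~\ref{dim} really only uses quantitative smallness of $W$ in $\dot{H}^s$ and not its precise relationship to $V$, so that one can legitimately substitute the remainder generated by $V-\kappa^2$ for the one generated by $V$; once this is isolated, the stationary phase analysis on the square is elementary because the separability of the phase reduces it to classical one-dimensional Fresnel asymptotics.
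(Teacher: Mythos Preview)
Your argument is correct, but it differs from the paper's route in how the square hypothesis is used. The paper does \emph{not} split $V$ from $\kappa^2\chi_\Omega$: it applies the whole recovery machinery (the proof of Corollary~\ref{dim}) directly to the shifted potential $q=V-\kappa^2\chi_\Omega$. The only obstruction is that $\chi_\Omega\notin H^{1/2}$, so $q\notin H^{1/2}$; the paper circumvents this by observing that the proof of the maximal estimate (Theorem~\ref{it}) actually only uses the anisotropic norm $\big\|\big(i\partial_{x_1}\big)^{1/4}\big(i\partial_{x_2}\big)^{1/4}q\big\|_{L^2}$, which \emph{is} finite for $\chi_\Omega$ when $\Omega$ is an axis-parallel square (and fails for a disc). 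Thus the square enters through the tensor structure hidden in the inequality $|\xi|^{2s}\ge|\xi_1|^s|\xi_2|^s$ inside the proof of Theorem~\ref{it}.

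Your approach exploits the same tensor structure, but on the spatial side: you separate $V$ from $\kappa^2\chi_\Omega$, invoke Corollary~\ref{dim} for the $V$-piece after checking (correctly) that the remainder estimate of Theorem~\ref{Remainder} only uses $\|V\|_{\dot H^{s}}$ and $\|W\|_{\dot H^{s}}$ separately, and then compute $T^k_1[\chi_\Omega]$ by factorising the phase into one-dimensional Fresnel integrals. This is more elementary for the constant piece and avoids re-entering the maximal-function argument; the paper's approach is more unified, treating $q$ in one stroke and sidestepping the need to verify the decoupling of $V$ and $W$. One small imprecision: when you say the bilinear estimate is ``applied with the symbol~$1$ in place of $V$'', what is really being used is $\chi_\Omega\in\dot H^{s}$ for $s<1/2$, since the integral is over~$\Omega$; this is of course true and is exactly what feeds into Lemma~\ref{isitgood} and the bound for $T^k_W[\chi_\Omega]$.
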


Interpreting the problem acoustically, it is unsurprising that we are unable to recover potentials in $H^s$ with $s<1/2$. Taking $$V(x)=\kappa^2(1-c^{-2}(x)),$$ where $c(x)$ denotes the speed of sound at~$x$, the scattered solutions $u$ also satisfy $c^2\Delta u + \kappa^2 u=0$. Now  there are potentials in $H^s$, with $s<1/2$, which are singular on closed curves (see for example~\cite{Z}). Thus the speed of sound is zero on the curve and so  a continuous solution~$u$ would be zero. That is to say, the continuous incident waves cannot pass through the curve and we should not expect to be able to detect modifications of the interior of the potential which is cloaked in some sense (see \cite{GKLU} for more sophisticated types of cloaking). From this point of view, the uniqueness results \cite{Blastenpp, IY} reflect the tunneling phenomenon in quantum mechanics.

\section{The Bukhgeim solutions}\label{rem}

Writing $\partial_z =\frac{1}{2}(\partial_x-i\partial_y)$ and $\partial_{\overline{z}} =\frac{1}{2}(\partial_x+i\partial_y)$,
we consider the complex analytic interpretation of the Schr\"odinger equation $4\partial_z\partial_{\overline{z}} u=V u$. When looking for solutions of the form $u=e^{i{\psi}}\big(1+w\big)$, the
equation is equivalent
to the system
$$
2\partial_{\overline{z}}w=e^{-i({\psi}+\overline{{\psi}})}v,\qquad 2\partial_z v=e^{i({\psi}+\overline{{\psi}})}V(1+w),
$$
which is solved in $\D$ whenever
$$
w=\tfrac{1}{4}\partial_{\overline{z}}^{-1}\Big[
e^{-i({\psi}+\overline{{\psi}})}\chi_{Q}\,\partial_{z}^{-1}\big[e^{i({\psi}+\overline{{\psi}})}\,V(1+w)\big]\Big].
$$
Here, we take $Q $ to be a fixed, auxiliary, axis-parallel square which properly contains $\D$.
Thus, defining the operator $\S^k_V\equiv \S^{k,x}_V$ by
\begin{equation*}\label{operator}
\S^k_V [F]=\tfrac{1}{4}\partial_{\overline{z}}^{-1}\Big[
e^{-i({\psi}+\overline{{\psi}})}\chi_{Q}\,\partial_{z}^{-1}\big[e^{i({\psi}+\overline{{\psi}})}\chi_Q\,VF\big]\Big],
\end{equation*}
we see that as soon as $\|\S^k_V\|_{\dot{H}^s\to \dot{H}^s}<1$, we can treat
 $(\I-\S^k_V)^{-1}$ by Neumann series to deduce  that it is a bounded operator on $\dot{H}^{s}$. This yields a solution $u_{k,x}\equiv e^{i{\psi}}\big(1+w\big)$ where
\begin{equation}\label{bukhsoln}
w\equiv w_{k,x}=(\I-\S^k_V)^{-1}\S^k_V[1]\in \dot{H}^s.
\end{equation}

In what remains of this section, we prove that $\S^k_V$ is contractive for sufficiently large $k$. This property will be crucial in the proof of Theorem~\ref{far} as well as in Section~\ref{recovery}.
We write $\S^k_V[f]=\tfrac{1}{4}\S^k_1[Vf]$, where
$$\S^k_1=
\partial_{\overline{z}}^{-1}\circ \M^{-{k}}\circ\partial_z^{-1} \circ
\M^{{k}}$$ and the multiplier operators $\M^{\pm{k}}$ are defined by
$
\M^{\pm{k}}[F]=
 e^{\pm i({\psi}+\overline{{\psi}})}\chi_{Q}\, F.
$
The key ingredient in the proof of the following estimate, is the classical lemma of van der Corput \cite{van}. 

\begin{lem}\label{isitgood} Let $0\le s_1,s_2<1$. Then
\begin{equation*}\|\M^{\pm k}[F](\cdot,x)\|_{\dot{H}^{-s_2}}\le Ck^{-\min\{s_1,s_2\}}\|F(\cdot,x)\|_{\dot{H}^{s_1}},\quad x\in \D,\ k\ge 1.
\end{equation*}
\end{lem}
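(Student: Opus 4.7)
By duality against $\dot{H}^{s_2}$, the claim is equivalent to the bilinear estimate
$$
\Bigl|\int_{Q} e^{\pm i(\psi + \overline{\psi})(z)}\, F(z)\overline{G(z)}\,dz\Bigr| \ls k^{-\lambda}\,\|F\|_{\dot{H}^{s_1}}\|G\|_{\dot{H}^{s_2}}.
$$
My plan is to factor the multiplier using the axis-parallel structure of $Q$, estimate its Fourier transform via van der Corput, and close the argument with a weighted Cauchy--Schwarz on the Fourier side.

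Writing $z-x = u+iv$, one has $\psi+\overline\psi = \tfrac{k}{4}(u^2-v^2)$ and, because $Q$ is axis-parallel, $\chi_{Q}(z) = \chi_{I_1}(u)\chi_{I_2}(v)$. Hence $m(z) := e^{\pm i(\psi+\overline\psi)(z)}\chi_Q(z)$ factors as a product $m_1(u) m_2(v)$ of truncated one-dimensional chirps with quadratic phases $\pm k u^2/4$ and $\mp k v^2/4$. The Fourier transform factors correspondingly, and the classical van der Corput lemma applied to each $\hat m_i$ (with $|\phi''|\sim 1$) yields $|\hat m_i(\xi)| \ls k^{-1/2}$ uniformly in $\xi$; a single integration by parts in the non-stationary regime $|\xi|\gs k$ sharpens this to $|\hat m_i(\xi)|\ls |\xi|^{-1}$. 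Consequently $|\hat m(\zeta)|\ls k^{-1}$ on the effective support $|\zeta|\ls k$, with additional decay outside.

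By Plancherel and the Fourier representation of $F,G$, the bilinear form equals
$$
\iint \hat m(\xi-\eta)\,\hat F(\eta)\,\overline{\hat G(\xi)}\,d\eta\,d\xi,
$$
and a Cauchy--Schwarz step in $(\xi,\eta)$ with the Sobolev weights $|\eta|^{s_1}$ and $|\xi|^{s_2}$ reduces the problem to the weighted bound
$$
\iint |\hat m(\xi-\eta)|^{2}\,|\eta|^{-2s_1}|\xi|^{-2s_2}\,d\eta\,d\xi \ls k^{-2\lambda}.
$$
Changing variables $\zeta = \xi-\eta$ and computing the resulting convolution of Riesz weights in $\eta$ (which on its regime of convergence yields a power $|\zeta|^{2-2s_1-2s_2}$), the pointwise van der Corput estimate on $|\hat m(\zeta)|$ together with its effective support of radius $k$ produces the required decay.

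The main obstacle is capturing the full range $0<\lambda<s_1,s_2<1$ stated in the lemma: the weighted convolution of Riesz kernels converges at infinity only when $s_1+s_2>1$, so the straightforward Cauchy--Schwarz step above falls short in the complementary regime $s_1+s_2\le 1$. To remedy this, I would split $F$ and $G$ via a Littlewood--Paley decomposition into dyadic frequency scales, applying the van der Corput bound $k^{-1/2}$ on each scale separately and summing the dyadic contributions carefully against the Sobolev exponents; the balancing of these sums across scales is the most delicate step.
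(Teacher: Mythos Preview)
Your bilinear approach via Cauchy--Schwarz has a genuine gap that goes beyond the $s_1+s_2\le 1$ obstruction you already flag. Even in the range $s_1+s_2>1$ where the Riesz convolution converges, your weighted integral
\[
\int |\hat m(\zeta)|^2\,|\zeta|^{2-2s_1-2s_2}\,d\zeta,
\]
evaluated with the uniform van der Corput bound $|\hat m|\ls k^{-1}$ on the effective support $|\zeta|\ls k$, yields only $k^{2-2s_1-2s_2}$, i.e.\ $\lambda\le s_1+s_2-1$. Since $s_1+s_2-1=\min(s_1,s_2)-(1-\max(s_1,s_2))$, this is strictly weaker than the claimed range $\lambda<\min(s_1,s_2)$ whenever $\max(s_1,s_2)<1$. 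The crude Cauchy--Schwarz in $(\xi,\eta)$ discards all interaction between $\hat F$ and $\hat G$ and cannot recover the sharp exponent; your proposed Littlewood--Paley remedy is left as a sketch, and summing scale-by-scale $k^{-1/2}$ bounds will run into the same loss unless an additional idea is introduced.

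The paper closes this gap by \emph{interpolating} against a second, trivial endpoint that your argument never invokes: since $|e^{\pm i(\psi+\overline\psi)}|=1$ and $\chi_Q$ has compact support, the operator $\M^{\pm k}$ is bounded $\dot H^{s_5}\to\dot H^{-s_6}$ for any $0\le s_5,s_6<1$ with \emph{no} decay in $k$ (Hardy--Littlewood--Sobolev plus H\"older on the compact support). On the other hand, the van der Corput bound $\|\widehat{\M^{\pm k}F}\|_\infty\ls k^{-1}\|\widehat F\|_1$ is recast as a Besov estimate $\dot B^1_{2,1}\to\dot B^{-1}_{2,\infty}$ carrying the full $k^{-1}$ decay. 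After a low/high frequency splitting on both input and output (so that Besov and Sobolev norms become comparable on each piece, by summing geometric series), complex interpolation between the $k^{-1}$ endpoint and the $k^0$ trivial estimate delivers the full range $0<\lambda<s_1,s_2<1$. The missing ingredient in your proposal is precisely this decay-free second endpoint.
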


\begin{proof} 
By the H\"older and Hardy--Littlewood--Sobolev inequalities, we have
\begin{equation}\label{ds}
\|\M^{\pm k}[F]\|_{2}\le C\|F\|_{\dot{H}^{s_1}},
\end{equation}
and 
\begin{equation}\label{fs}
\|\M^{\pm k}[F]\|_{\dot{H}^{-s_2}}\le C\|F\|_{2},
\end{equation}
with $0\le s_1,s_2<1$.
So by complex interpolation, it will suffice to prove that
\begin{equation}\label{rs}\|\M^{\pm k}[F]\|_{\dot{H}^{-s}}\le Ck^{-s}\|F\|_{\dot{H}^{s}}.
\end{equation}
Indeed, if $s_2<s_1$ we interpolate with \eqref{ds}, taking $s=s_1$, and if $s_1<s_2$ we interpolate with \eqref{fs}, taking $s=s_2$.
Now by real interpolation with the trivial $L^2$ bound, \eqref{rs} would follow from
\begin{equation}\label{besov}
\|\M^{\pm k}F\|_{\dot{B}^{-1}_{2,\infty}}\le Ck^{-1}\,\|F\|_{\dot{B}^{1}_{2,1}}
\end{equation}
(see Theorem 6.4.5 in \cite{BL}), where the Besov norms are defined as usual by
$$
\|f\|_{\bi}=\sup_{j\in\Z} 2^{-j}\|\P_jf\|_{L^2}\quad\quad \text{and}\quad\quad \|f\|_{\b}=\sum_{j\in\Z} 2^{j}\|\P_jf\|_{L^2}.
$$
Here, $\widehat{\P_j f}= \vartheta(2^{-j} |\cdot| ) \widehat{f}$  with $\vartheta$ satisfying $\text{supp}\, \vartheta \subset (1/2,2)$ and
$$
\sum_{j\in\Z} \vartheta(2^{-j} \cdot )=1.
$$
As
$
\|F\|_{\dot{B}^{-1}_{2,\infty}}\le C\|\widehat{F}\|_\infty
$
and
$
\|\widehat{F}\,\|_1\le C\|F\|_{\dot{B}^1_{2,1}},
$
the estimate \eqref{besov} would in turn follow from
\begin{equation}\label{pop}
\|\widehat{\M^{\pm k}F}\|_{\infty}\le Ck^{-1}\,\|\widehat{F}\|_{1}.
\end{equation}
Now, by the Fourier inversion formula and Fubini's theorem,
\begin{align*}
|\widehat{\M^{\pm k}F}(\xi)|&=\frac{1}{(2\pi)^2}\Big|\int_Q e^{\pm i(\psi(z)+\overline{\psi(z}))}\int \widehat{F}(\omega)\,e^{iz\cdot \omega} d\omega\, e^{-iz\cdot \xi} dz\Big|\\
&\le \int\Big|\int_Q
e^{\pm ik\frac{(z_1-x_1)^2-(z_2-x_2)^2}{4}}\,  e^{iz\cdot (\omega-\xi)}
dz\Big| |\widehat{F}(\omega)|\, d\omega
\end{align*}
so that \eqref{pop} follows by two applications of van der Corput's lemma \cite{van} (factorising the integral into the product of two integrals).
\end{proof}

In the following lemma, we optimise the decay in $k$, which will be important in Section~\ref{recovery}.

\begin{lem}\label{celtic7} Let $0<s<1$. Then
$$
\|\S^k_1[F](\cdot,x)\|_{\dot{H}^{s}}\le Ck^{-1}\|F(\cdot,x)\|_{\dot{H}^{s}},\quad x\in \D,\ k\ge 1.
$$
\end{lem}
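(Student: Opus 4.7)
The plan is to peel the operator $\S^k_1 = \partial_{\overline{z}}^{-1}\M^{-k}\partial_z^{-1}\M^k$ from the outside in, using that each Cauchy-type inverse $\partial_z^{-1}$, $\partial_{\overline z}^{-1}$ is a homogeneous Fourier multiplier of degree $-1$ (hence bounded $\dot H^{t-1}\to \dot H^t$ for every $t$), while each factor of $\M^{\pm k}$ can be traded for a power of $k^{-1}$ via Lemma~\ref{isitgood}. Since Lemma~\ref{isitgood} requires a positive source exponent and a negative target exponent, the two Cauchy transforms supply exactly the two units of regularity needed to feed both applications.

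More concretely, the outer $\partial_{\overline z}^{-1}$ reduces matters to bounding $\|\M^{-k}\partial_z^{-1}\M^k F\|_{\dot H^{s-1}}$. Writing $s-1 = -(1-s)$ with $1-s\in(0,1)$, Lemma~\ref{isitgood} applied to $\M^{-k}$ with source exponent a free parameter $\sigma\in(0,1)$ produces a factor $k^{-\lambda_1}$ for any $\lambda_1<\min(\sigma,1-s)$, reducing the task to $\|\partial_z^{-1}\M^k F\|_{\dot H^\sigma}$. Invoking $\partial_z^{-1}:\dot H^{\sigma-1}\to\dot H^\sigma$ and then a second application of Lemma~\ref{isitgood} to $\M^k$ (with target $-(1-\sigma)$ and source $s$) yields a further factor $k^{-\lambda_2}$ for any $\lambda_2<\min(s,1-\sigma)$, terminating at $\|F\|_{\dot H^s}$.

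The total decay is thus $k^{-(\lambda_1+\lambda_2)}$ with $\lambda_1+\lambda_2<\min(\sigma,1-s)+\min(s,1-\sigma)$. The final step is an optimisation in $\sigma$: the choice $\sigma=1-s$ lies in $(0,1)$ for every admissible $s$ and makes the sum of minima equal to $(1-s)+s=1$. Given any prescribed $\lambda\in(0,1)$, we may therefore split it as $\lambda=\lambda_1+\lambda_2$ with $\lambda_1<1-s$ and $\lambda_2<s$, each strictly, which gives the desired bound with a constant $C=C(s,\lambda)$.

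The main point needing care is the homogeneous-Sobolev bookkeeping. Since $\M^{\pm k}$ multiplies by $e^{\pm i(\psi+\overline\psi)}\chi_Q$, every intermediate function is supported in the fixed square $Q$, so the Cauchy transforms act on compactly supported data and the implicit constants in the $\dot H^{t-1}\to\dot H^t$ boundedness are independent of $k$ and of $x\in\Omega$. It is also worth noting that the sum of minima can only approach, not attain, the value $1$, matching the strict inequality $\lambda<1$ in the statement.
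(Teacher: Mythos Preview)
Your proof is correct and follows essentially the same route as the paper: two applications of Lemma~\ref{isitgood} interleaved with the $\dot H^{t-1}\to\dot H^t$ boundedness of $\partial_z^{-1}$ and $\partial_{\overline z}^{-1}$. The only cosmetic difference is that you carry a free intermediate exponent $\sigma$ and then optimise to $\sigma=1-s$, whereas the paper simply makes that choice from the outset, arriving directly at $\lambda_1<1-s$ and $\lambda_2<s$.
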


\begin{proof} By two applications of Lemma~\ref{isitgood},
\begin{align*}
\|\S^k_1\|_{\dot{H}^{s}\to \dot{H}^{s}}
\le \|\M^{-{k}}\circ\partial^{-1}_z \circ \M^{{k}}\|_{\dot{H}^{s}\to \dot{H}^{s-1}}&\le Ck^{s-1}\|\partial^{-1}_z \circ \M^{{k}}\|_{\dot{H}^{s}\to \dot{H}^{1-s}}\\
&\le Ck^{s-1}\|\M^{{k}}\|_{\dot{H}^{s}\to \dot{H}^{-s}}\\
&\le Ck^{s-1-s}=Ck^{-1},
\end{align*}
and we are done.
\end{proof}

In the following lemma, we use Lemma~\ref{isitgood} only once, and
gain some integrability using the Hardy--Littlewood--Sobolev
theorem. By taking $k$ sufficiently large, we obtain our contraction
and thus our Bukhgeim solution $u=u_{k,x}$ as described above.

\begin{lem}\label{celtic3} Let $0<s<1$. Then
$$
\|\S^k_V[F](\cdot,x)\|_{\dot{H}^s}\le Ck^{-\min\{2s,1-s\}}\|V\|_{\dot{H}^{s}}\|F(\cdot,x)\|_{\dot{H}^s},\quad x\in \D,\ k\ge1.
$$
\end{lem}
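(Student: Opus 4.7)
The plan is to expand
$\S^k_V[F]=\tfrac{1}{4}\,\partial_{\overline z}^{-1}\circ \M^{-k}\circ \partial_z^{-1}\circ \M^k[VF]$
and peel off the four factors in turn, applying the oscillatory gain of Lemma~\ref{isitgood} only once, to the outer $\M^{-k}$. The reason for this asymmetric treatment, in contrast with Lemma~\ref{celtic7}, is that the inner factor $\M^k[VF]$ cannot be estimated in a positive Sobolev space through Lemma~\ref{isitgood}: the product $VF$ does not naturally lie in $\dot{H}^{s}$, since there is no Sobolev algebra at regularity $s<1$ in two dimensions. Instead, for that inner factor I would trade the oscillation for integrability via Sobolev embedding and H\"older's inequality, making crucial use of the cutoff $\chi_Q$ sitting inside $\M^k$.

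Concretely, since $\partial_{\overline z}^{-1}$ gains exactly one derivative in the $L^2$ scale, one first reduces to
$\|\S^k_V[F]\|_{\dot{H}^s}\ls \|\M^{-k}\,\partial_z^{-1}\M^k[VF]\|_{\dot{H}^{s-1}}$.
Applying Lemma~\ref{isitgood} with $s_2=1-s$ and a free parameter $s_1\in(\la,1)$ then yields
$\|\M^{-k}\,\partial_z^{-1}\M^k[VF]\|_{\dot{H}^{s-1}}\ls k^{-\la}\,\|\partial_z^{-1}\M^k[VF]\|_{\dot{H}^{s_1}}\ls k^{-\la}\,\|\M^k[VF]\|_{\dot{H}^{s_1-1}}$,
valid as soon as $0<\la<s_1$ and $0<\la<1-s$. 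It then remains to show that $\|\M^k[VF]\|_{\dot{H}^{s_1-1}}\ls \|V\|_{\dot{H}^s}\|F\|_{\dot{H}^s}$ for an appropriate $s_1\in(\la,1)$.

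For this final step, the two-dimensional Sobolev embedding $\dot{H}^s(\R^2)\hookrightarrow L^{2/(1-s)}(\R^2)$ together with H\"older's inequality places $VF$ in $L^{1/(1-s)}$ with the required bound by $\|V\|_{\dot{H}^s}\|F\|_{\dot{H}^s}$, and since $\chi_Q$ restricts $\M^k[VF]$ to a set of finite measure, we also obtain $\M^k[VF]\in L^p$ for any $p\le 1/(1-s)$ with a constant depending only on $|Q|$. The dual Sobolev embedding $L^p\hookrightarrow \dot{H}^{s_1-1}$ requires $\tfrac{1}{p}=\tfrac{2-s_1}{2}$, and matching this against $p\le 1/(1-s)$ forces the constraint $s_1\le 2s$. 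Hence, for any $\la$ with $0<\la<2s$ and $0<\la<1-s$, I would choose $s_1=\min(2s,\,1-\varepsilon)$ with $\varepsilon$ small enough that $\la<s_1$, so that every constraint aligns.

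The main obstacle is precisely this index bookkeeping: the absence of a Sobolev product estimate at regularity $s<1$ is what forces the detour through $L^p$ and the dual Sobolev scale, which in turn caps $s_1$ at $2s$; this cap is exactly what manifests as the $\la<2s$ restriction in the statement.
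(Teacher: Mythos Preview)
Your proposal is correct and follows essentially the same route as the paper: place $VF$ in $L^{1/(1-s)}$ via Sobolev embedding and H\"older, spend Lemma~\ref{isitgood} once on the outer $\M^{-k}$, and close with the dual Sobolev embedding $L^p\hookrightarrow\dot H^{s_1-1}$ together with H\"older on $Q$. The only organisational difference is that the paper first factors the estimate as $\|\S^k_1\|_{L^q\to\dot H^s}$ with $q=\tfrac1{1-s}$ and then makes two explicit choices of the intermediate index ($s_1=2s$ when $s<1/3$ and $s_1=1-s$ when $s\ge 1/3$, the threshold $1/3$ being exactly where $q^*=\tfrac{2}{1+s}$ drops below $q$), whereas you carry $s_1$ as a free parameter and optimise at the end; your presentation thereby avoids the case split but yields the identical constraint $\la<\min(2s,1-s)$.
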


\begin{proof} By the Cauchy--Schwarz and Hardy--Littlewood--Sobolev inequalities,
$$
\|VF\|_q\le \|V\|_{2q}\|F\|_{2q}\le C\|V\|_{\dot{H}^s}\|F\|_{\dot{H}^s},
$$
where $q=\frac{1}{1-s}$. Thus, as $\S^k_V[F]=\S^k_1[VF]$, it will suffice to prove that
$$
\|\S^k_1\|_{L^q\to \dot{H}^{s}}\le Ck^{{-\min\{2s,1-s\}}}.
$$
When $0<s<1/3$, by Lemma~\ref{isitgood}, we have
\begin{align*}
\|\S^k_1\|_{L^q\to \dot{H}^{s}}
\le \|\M^{-{k}}\circ\partial^{-1}_z \circ \M^{{k}}\|_{L^q\to \dot{H}^{s-1}}
&\le Ck^{-2s}\|\partial^{-1}_z \circ \M^{{k}}\|_{L^q\to \dot{H}^{2s}}\\
&\le Ck^{-2s}\|\M^{{k}}\|_{L^q\to \dot{H}^{2s-1}}\\
&\le Ck^{-2s}\|\M^{{k}}\|_{L^q\to L^q}.
\end{align*}
When $s\ge 1/3$, we also use H\"older's inequality at the end;
\begin{align*}
\|\S^k_1\|_{L^q\to \dot{H}^{s}}
\le \|\M^{-{k}}\circ\partial^{-1}_z \circ \M^{{k}}\|_{L^q\to \dot{H}^{s-1}}&\le Ck^{1-s}\|\partial^{-1}_z \circ \M^{{k}}\|_{L^q\to \dot{H}^{1-s}}\\
&\le Ck^{1-s}\|\M^{{k}}\|_{L^{q}\to \dot{H}^{-s}}\\
&\le Ck^{1-s}\|\M^{{k}}\|_{L^q\to L^{q^*}},
\end{align*}
where $q^*=\frac{2}{s+1}$, and so we are done.
 \end{proof}
 
 \begin{rem}
 Note that van der Corput's estimate is independent of the size of $Q$ and so, when $s\ge 1/3$, the potential need not be compactly supported for the results of this section to hold (when $s<1/3$ we used the compact support in a less obviously removable way).
 \end{rem}

\section{Proof of Theorem~\ref{far}}\label{fa}
In this section we show that the boundary values of our Bukhgeim solution
$u_{k,x}$  can be determined from knowledge of $\Lambda_V$. The argument  is inspired by \cite[Theorem 5]{Nachman96} but we
replace the Faddeev green function $G_k$ by its analogue in terms of
the operator $\S^k_V$ and avoid the use of single layer potentials.

Indeed, considering the kernel representation of $\S^k_1$, we can write $\S^k_V[F]$  in the form
\[ \begin{aligned}   \S^k_V[F](z)= \int_{\Omega} g_\psi (z,\eta) V(\eta)F(\eta)\, d\eta.  \end{aligned}
\]
where  $g_\psi$, the kernel of $\S^k_1$, is given by
\[ g_\psi(z,\eta)= \chi_{Q}(\eta)\frac{e^{i\big({\psi(\eta)}+\overline{{\psi(\eta}})\big)}}{4\pi^2}
 \int_{Q} \frac{1}{(\overline{\omega-\eta})(z-\omega)} e^{-i\big({\psi(\omega)}+\overline{{\psi(\omega}})\big)}\, d\omega.  \]
In order to work directly with exponentially growing solutions we
conjugate~$g_\psi $ with the exponential factors, so that
\begin{equation}\label{its}
\int_{\Omega} G_\psi (z,\eta) V(\eta)  F(\eta)\, d\eta= e^{i
{\psi(z)}} \S^k_V[e^{-i{\psi}}  F](z),
\end{equation}
where $G_\psi(z,\eta)=
e^{i {\psi}(z)} g_\psi(z,\eta)e^{-i {\psi} (\eta)}$.
Notice also that when $z   \in Q\backslash \D$ and
$\eta \in \D$, we have that
\begin{equation*} \label{harmonic}
\Delta_\eta G_\psi(z,\eta) =0.
\end{equation*}
Thus, if we take \eqref{its} with $F=P_V(f)$, where $P_V(f)$ solves
$\Delta u=Vu$ with $u|_{\partial\Omega}=f$, using Alessandrini's
identity we obtain that, for each $z \in Q \setminus \D$,
\begin{equation} \label{its2}
 \Big\langle   (\Lambda_V-\Lambda_0)[f], G_\psi(z,\cdot)|_{\partial\D}\Big\rangle=e^{i {\psi(z)}} \S^k_V[e^{-i{\psi}}  P_V(f)](z).
\end{equation}
In particular  the right-hand side belongs to $H^1(Q\setminus\D)$
and hence we can define the operator  $\Gamma_{\!\psi}: H^{1/2} \to
H^{1/2}$  by
\[ \Gamma_{\!\psi}[f]=  T_r\circ \Big\langle   (\Lambda_V-\Lambda_0)[f], G_\psi|_{\partial\D} \Big\rangle, \]
where $T_r : H^1(Q\setminus \D)\to H^{1/2}(\partial\D)$ is the trace
operator. Now, by the definitions of $u_{k,x}$ and $w$, we also
deduce from \eqref{its} and \eqref{bukhsoln} that
\begin{equation}\label{its3}
\int_{\Omega} G_\psi (\cdot,\eta) V(\eta)\, u_{k,x}(\eta)\, d\eta=
e^{i {\psi}} \S^k_V[1+w]= e^{i {\psi} } w= u_{k,x}-e^{i{\psi}}.
\end{equation}
Combining \eqref{its}, \eqref{its2}, and \eqref{its3} we obtain the integral identity
\begin{equation*} (\I-\Gamma_{\!\psi})[u_{k,x}|_{\partial\D}]=e^{i{\psi}}|_{\partial\D}. \end{equation*}

Thus, we can determine $u_{k,x}$ on the boundary if we can invert
$(\I- \Gamma_{\!\psi})$. By the Fredholm alternative it will suffice
to show that $\Gamma_{\!\psi}$ is compact and that
$(\I-\Gamma_{\!\psi})$ has a trivial kernel on $H^{1/2}(\partial
\D)$.

\begin{thm} Let $V \in \dot{H}^s$ with $0<s<1$. Then
\begin{itemize}
\item [{(i)}] $\Gamma_{\!\psi}$ is compact
\item [{(ii)}] $\Gamma_{\!\psi}[f]=f \Rightarrow f=0$.
\end{itemize}
\end{thm}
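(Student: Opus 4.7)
The plan is to use identity~\eqref{its2} to express $\Gamma_{\!\psi}[f]\big|_{\partial\Omega}$ as the trace of a function whose Sobolev regularity strictly exceeds $H^{1/2}$. Since $P_V\colon H^{1/2}(\partial\Omega)\to H^1(\Omega)$ is bounded and $V\in H^s$, the two-dimensional Sobolev product estimate gives $V\cdot e^{-i\psi}P_V(f)\in H^s$ with compact support. Unravelling $\S^k_V$, the inner $\partial_z^{-1}$ gains one derivative to produce an $H^{s+1}_{\mathrm{loc}}$ function; the subsequent multiplication by $e^{-i(\psi+\overline{\psi})}\chi_Q$ only reduces regularity to $H^{1/2-\epsilon}$, since $\chi_Q$ itself lies in $H^{1/2-\epsilon}$ and the product estimate $H^{1/2-\epsilon}\cdot H^{s+1}\hookrightarrow H^{1/2-\epsilon}$ holds for $s>0$; and the outer $\partial_{\overline{z}}^{-1}$ restores one derivative to give $h:=e^{i\psi}\S^k_V[e^{-i\psi}P_V(f)]\in H^{3/2-\epsilon}_{\mathrm{loc}}$. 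The trace theorem then places $\Gamma_{\!\psi}[f]$ in $H^{1-\epsilon}(\partial\Omega)$ with norm dominated by $\|f\|_{H^{1/2}(\partial\Omega)}$, and the compact Sobolev embedding $H^{1-\epsilon}(\partial\Omega)\hookrightarrow H^{1/2}(\partial\Omega)$ delivers compactness of $\Gamma_{\!\psi}$.

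\textbf{Part (ii) — triviality of the kernel.} Given $\Gamma_{\!\psi}[f]=f$, set $u:=P_V(f)\in H^1(\Omega)$ and define, for all $z\in\mathbb{R}^2$,
\[
h(z):=e^{i\psi(z)}\,\S^k_V[e^{-i\psi}u](z).
\]
The first task is to show $h=u$ on $\Omega$. Since $\psi$ is holomorphic and $\overline{\psi}$ antiholomorphic in $z$, we have $\partial_{\overline{z}}e^{i\psi}=0$ and $\partial_z e^{-i\overline{\psi}}=0$. Peeling the two Cauchy transforms in the definition of $\S^k_V$ (and simplifying $e^{i(\psi+\overline{\psi})}\cdot V e^{-i\psi}u = e^{i\overline{\psi}}Vu$) produces
\[
\partial_{\overline{z}} h=\tfrac14\,e^{-i\overline{\psi}}\chi_Q\,\partial_z^{-1}\!\big[e^{i\overline{\psi}}Vu\big],\qquad \Delta h=Vu \ \ \text{on}\ \ \mathrm{int}(Q).
\]
Hence $h-u$ is harmonic on $\Omega$, and the hypothesis gives $h|_{\partial\Omega}=\Gamma_{\!\psi}[f]=f=u|_{\partial\Omega}$; uniqueness for the Dirichlet problem forces $h=u$ on $\Omega$.

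Now set $w:=e^{-i\psi}h$. Since $\S^k_V[F]$ depends on $F$ only through the product $VF$, which is supported in $\Omega$ where $w|_\Omega=e^{-i\psi}u$, the global identity $\S^k_V[w]=\S^k_V[e^{-i\psi}u]=w$ holds on $\mathbb{R}^2$. By Lemma~\ref{celtic3}, $\|\S^k_V\|_{\dot H^s\to\dot H^s}<1$ for $k$ sufficiently large; since $e^{-i\psi}u$ is compactly supported in $H^1$, both it and $w=\S^k_V[e^{-i\psi}u]$ lie in $\dot H^s$, and contractivity forces $w=0$. Therefore $h\equiv 0$, $u=0$, and $f=u|_{\partial\Omega}=0$.

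\textbf{Main obstacle.} The decisive computation is the identity $\Delta h=Vu$ on $\mathrm{int}(Q)$; it is not intrinsically hard but requires careful tracking of $\partial_z,\partial_{\overline{z}}$ through the exponential factors and the two Cauchy transforms of $\S^k_V$, relying on the holomorphicity of $\psi$. Once this identity is established, part (ii) collapses to Dirichlet uniqueness plus the contractivity from Lemma~\ref{celtic3}. In part (i) the subtle point is confirming that the Cauchy-transform regularity gains survive the loss imposed by the rough cutoff $\chi_Q$, a borderline two-dimensional Sobolev product exercise.
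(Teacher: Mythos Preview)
Your argument for part~(ii) is essentially the paper's: with $\rho=e^{-i\psi}h$ (the paper's notation) you compute $\Delta(e^{i\psi}\rho)=VP_V(f)$ on $\Omega$, deduce $e^{i\psi}\rho=P_V(f)$ by Dirichlet uniqueness, conclude $\rho=\S^k_V[\rho]$, and invoke the contractivity of Lemma~\ref{celtic3}. One small wrinkle: your justification that $e^{-i\psi}u\in\dot H^s$ via ``compactly supported in $H^1$'' is not quite right, since zero-extension of an $H^1(\Omega)$ function need not lie in $H^s(\mathbb{R}^2)$ when $s\ge 1/2$; but what you actually need, namely $w\in\dot H^s$, follows directly from the $L^q\to\dot H^s$ bound on $\S^k_1$ proved inside Lemma~\ref{celtic3}, since $Ve^{-i\psi}u\in L^q(\Omega)$ with $q=1/(1-s)$.

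For part~(i) you take a different route from the paper. The paper places all the compactness in the Rellich embedding $H^1(\Omega)\hookrightarrow L^p(\Omega)$ and afterwards uses only H\"older plus the $L^2$-boundedness of the Cauchy and Beurling transforms to reach $H^1(Q\setminus\Omega)$, whence the ordinary trace lands in $H^{1/2}(\partial\Omega)$. You instead chase Sobolev regularity through both Cauchy transforms and the rough cutoff $\chi_Q$ to obtain $h\in H^{3/2-\epsilon}_{\mathrm{loc}}$, and then use the compact embedding $H^{1-\epsilon}(\partial\Omega)\hookrightarrow H^{1/2}(\partial\Omega)$. Your route is viable but one step is stated at a failing endpoint: in two dimensions the product $H^s\cdot H^1\hookrightarrow H^s$ is exactly the borderline $s_1+s_2-s=n/2$, so $V\cdot e^{-i\psi}P_V(f)\in H^s$ is not literally true---you only get $H^{s-\epsilon}$. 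This is harmless downstream (you still reach $H^{3/2-\epsilon'}_{\mathrm{loc}}$), but it illustrates why the paper's cruder $L^p$ argument is cleaner here.
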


\begin{proof}[Proof of (i)]

We have that
\[\Gamma_{\!\psi}[f]= T_r\big[ e^{i{\psi}} \S^k_V[e^{-i{\psi}}P_V(f)]\big]. \]
As the set of compact operators is a left and right ideal, we
consider the boundedness properties of each component of the
composition. Firstly,  $P_V:H^{1/2}(\partial\D) \to H^1(\D)$ is
bounded.  Secondly, $H^1(\D) \hookrightarrow   L^p(\Omega)$ compactly for
all $2<p<\infty$. Now taking $p$ sufficiently large and
$\tfrac{1}{2}=\tfrac1{q}+\tfrac1p,$ by the boundedness of the Cauchy
transform followed by the Hardy--Littlewood--Sobolev inequality,
\begin{align*}
\|\S^k_V[e^{-i{\psi}}G]\|_{H^1(Q\setminus \D)}\le C\|VG\|_{L^2(\Omega)}&\le
C\|V\|_{L^q(\Omega)}\|G\|_{L^p(\Omega)}\\
&\le C\|V\|_{\dot{H}^s}\|G\|_{L^p(\Omega)}.
\end{align*}
Finally, $T_r:H^1(Q\setminus \D) \to H^{1/2}(\partial\D)$ is
bounded. Since the embedding $H^1(\D) \hookrightarrow L^p(\Omega)$
is compact,  it follows that $\Gamma_{\!\psi}$ is compact.
\vspace{0.5em}

\noindent{\it Proof of (ii).}  Letting $\rho= \S^k_V[e^{-i{\psi}}
P_V(f)]$, we have that $$\dc [e^{i{\psi}} {\rho}] =
\tfrac{1}{4}e^{-i\overline{{\psi}}}\chi_Q \d^{-1}[ e^{i\overline{{\psi}}} V
P_V(f)],$$ so that $$4\d \dc [e^{i{\psi}} {\rho}] = V P_V(f) \quad
\mbox{ on }\, \D.$$ This can be rewritten as $ \Delta[ e^{i{\psi}}
{\rho} - P_V(f)] = 0$ on $\Omega$.  Now by hypothesis
$\Gamma_{\!\psi}[f]=f$, so that by \eqref{its2} we have $e^{i{\psi}}
{\rho}=f$ on $\partial \D$. Combining the two, we see that
$$e^{i{\psi}} {\rho} = P_V(f) \quad
\mbox{ on }\, \D.$$  From the definition of $\rho$ we see that
${\rho} = \S^k_V[{\rho}]$, and as soon as $\S^k_V$ is strictly
contractive, that ${\rho}=0$. This of course follows from
Lemma~\ref{celtic3} for large enough $k$. Thus, $f = e^{i{\psi}}
{\rho} = 0$, so that $\I-\Gamma_{\!\psi}$ is injective as desired.
\end{proof}

\begin{rem}\label{rem} We need not suppose that the potential is  compactly supported here as long as we suppose that $\chi_\Omega V\in H^\varepsilon$ and then the Bukhgeim solutions which we identify are associated to this potential instead. For  $0<\varepsilon<1/2$ and $\Omega$ Lipschitz,  we have  $\chi_\Omega V\in H^\varepsilon$ as long as $V\in H^{s}$ with $s>1/2+\varepsilon$. To see this, note that by the fractional Leibnitz rule (see for example \cite{KPV}), 
$$
\|\chi_\Omega V\|_{H^\varepsilon}\le \|\chi_\Omega\|_4\|V\|_{W^{\varepsilon,4}}+\|\chi_\Omega\|_{W^{\varepsilon,p}}\|V\|_{L^q}
$$
with $p<\frac{4}{1+2\varepsilon}$ and $\frac{1}{p}+\frac{1}{q}=\frac{1}{2}$. Then the remark follows by the Hardy--Littlewood--Sobolev inequality, combined with the fact that $\chi_{\Omega}\in H^s$ for all $s<1/2$ (see for example \cite{FR}). 
\end{rem}

\section{Potential recovery}\label{recovery}

In order to recover the potential at $x\in \D$, it remains to show
that the right-hand side of Alessandrini's identity \eqref{ale} converges to $V(x)$.
That is to say $\T^k_{1+w} V(x)$ converges to $V(x)$ as $k$ tends to infinity, where
$$
\T^k_{1+w}[F](x)=\frac{k}{4\pi}\int_{\R^2}
e^{i({\psi(z)}+\overline{{\psi}(z}))}\,F(z)\big(1+w(z)\big)\,dz.
$$
First we show that $\T_w^kV$ can be considered to be a remainder term.

\begin{thm}\label{Remainder}
Let $V  \in \dot{H}^{s}$ with $0<s<1$. Then
\[ \lim_{k \to \infty}   \T^k_w[V](x)=0,\quad x\in\D.\]
Moreover, if  $k\ge (1+c\|V\|_{\dot{H}^{s}})^{\max\{\frac{1}{2s},\frac{1}{1-s}\}}$, then
$$
\sup_{x\in\D}|\T^k_w[V](x)|\le Ck^{-s}\|V\|_{\dot{H}^{s}}^2.
$$
\end{thm}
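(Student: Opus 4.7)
The plan is to recognize $\T^k_w[V](x)$ as a Sobolev duality pairing and then harvest $k$-decay from two independent sources: an oscillatory gain on $V$ from Lemma~\ref{isitgood}, and the intrinsic smallness of $w=w_{k,x}$ inherited from the Neumann series that defines it. Since $V$ is supported in $\D\subset Q$, we can insert $\chi_Q$ freely and write
\begin{equation*}
\T^k_w[V](x) = \tfrac{k}{4\pi i}\int_{\R^2} e^{i({\psi}+\overline{{\psi}})}\chi_Q(z)\,V(z)\,w(z)\,dz = \tfrac{k}{4\pi i}\,\langle \M^{k} V,\,w\rangle,
\end{equation*}
whence by the $\dot{H}^{-s}$--$\dot{H}^{s}$ duality,
\begin{equation*}
|\T^k_w[V](x)| \ls k\,\|\M^{k} V\|_{\dot{H}^{-s}}\,\|w\|_{\dot{H}^{s}}.
\end{equation*}

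For the first factor, Lemma~\ref{isitgood} with $s_1=s_2=s$ directly supplies
\begin{equation*}
\|\M^{k} V\|_{\dot{H}^{-s}} \le Ck^{-\la_1}\|V\|_{\dot{H}^{s}},\qquad 0<\la_1<s.
\end{equation*}
For the second factor, I would start from the Neumann representation $w=(\I-\S^k_V)^{-1}\S^k_V[1]$ and the identity $\S^k_V[1]=\tfrac14\,\S^k_1[V]$ from Section~\ref{rem}. Lemma~\ref{celtic7} then yields
\begin{equation*}
\|\S^k_V[1]\|_{\dot{H}^{s}} \le Ck^{-\la_2}\|V\|_{\dot{H}^{s}},\qquad 0<\la_2<1,
\end{equation*}
while Lemma~\ref{celtic3} gives $\|\S^k_V\|_{\dot{H}^{s}\to \dot{H}^{s}}\le Ck^{-\mu}\|V\|_{\dot{H}^{s}}$ for any $\mu<\min\{2s,1-s\}$. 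The stated hypothesis $k\ge (1+c\|V\|^2_{\dot{H}^{s}})^{\max\{1/(2s),\,1/(1-s)\}}$ is tuned so that this operator norm drops below $1/2$, which makes the Neumann series converge geometrically and gives
\begin{equation*}
\|w\|_{\dot{H}^{s}} \le 2\,\|\S^k_V[1]\|_{\dot{H}^{s}} \le Ck^{-\la_2}\|V\|_{\dot{H}^{s}}.
\end{equation*}

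Combining the two estimates produces
\begin{equation*}
\sup_{x\in \D}|\T^k_w[V](x)| \le Ck^{1-\la_1-\la_2}\,\|V\|^2_{\dot{H}^{s}}.
\end{equation*}
For any $\la<s$, choose $\la_1$ just below $s$ and $\la_2$ just below $1$ with $\la_1+\la_2=1+\la$; this delivers the quantitative bound $\sup_x|\T^k_w[V](x)|\le Ck^{-\la}\|V\|^2_{\dot{H}^{s}}$, and the pointwise limit $\T^k_w[V](x)\to 0$ follows at once. The main obstacle is this tight exponent budget: oscillation alone (Lemma~\ref{isitgood}) can only buy $k^{-s}$, which is insufficient to absorb the prefactor $k$. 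The missing near-$k^{-(1-s)}$ of decay must come from $w$ itself, and it is only because the two nested inverse derivatives in $\S^k_1$ sit between two oscillatory multipliers — so that van der Corput can be invoked \emph{twice} — that Lemma~\ref{celtic7} supplies the required near-$k^{-1}$ gain, making the whole argument close.
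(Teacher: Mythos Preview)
Your argument is correct and essentially identical to the paper's: both use the $\dot{H}^{-s}$--$\dot{H}^{s}$ duality together with Lemma~\ref{isitgood} to extract $k^{-\la_1}$ from $\M^k V$, then invoke Lemma~\ref{celtic3} to sum the Neumann series and Lemma~\ref{celtic7} to gain the remaining near-$k^{-1}$ from $\S^k_1[V]$. Your bookkeeping of the exponents $\la_1,\la_2$ is slightly more explicit than the paper's, but the proof is the same.
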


\begin{proof}
By Lemma~\ref{isitgood},
\[ \begin{aligned} |\T^k_w[V](x)| & \le  Ck  \| \M^{{k}} [V]\|_{\dot{H}^{-s}}\|w\|_{\dot{H}^{s}}
\\  & \le  Ck^{1-s}  \|V\|_{\dot{H}^{s}}  \|(\I-\S^k_V)^{-1}\S^k_V[1]\|_{\dot{H}^{s}}.
\end{aligned}\]
By Lemma \ref{celtic3},  we can treat
 $(\I-\S^k_V)^{-1}$ by Neumann series to deduce  that it is a bounded operator on $\dot{H}^{s}$ when $k\ge 1$ and  $Ck^{-\min\{2s,1-s\}}\|V\|_{\dot{H}^{s}}\le \frac{1}{2}$. Then
\[\begin{aligned}
|\T^k_w[V](x)| & \le  Ck^{1-s}  \|V\|_{\dot{H}^{s}}  \|\S^k_1[V]\|_{\dot{H}^{s}}  \\
& \le   Ck^{-s}  \|V\|^2_{\dot{H}^{s}},
\end{aligned} \]
by an application of Lemma~\ref{celtic7}, which is the desired estimate.
\end{proof}

Noting that $e^{i({\psi(z)}+\overline{{\psi(z}}))}=
\exp\big(ik\frac{(z_1-x_1)^2-(z_2-x_2)^2}{4}\big)$, it remains to prove
\begin{equation}\label{ipo}
\lim_{k\to \infty} \T^k_{1}[V](x)=V(x),
\end{equation}
where $\T_1^{k}$ is defined by
\begin{equation*}\label{no}
\T_1^{k}[F](x)=\frac{k}{4\pi}\int
\exp\big(ik\tfrac{(z_1-x_1)^2-(z_2-x_2)^2}{4}\big) F(z)\,dz.
\end{equation*}
Now when $F$ is a Schwartz function, this is equal to
$e^{i\frac{1}{k}\Box}F(x)$, where
\begin{equation*}\label{nonon}
e^{i\frac{1}{k}\Box}[F](x)=\frac{1}{(2\pi)^2}\int_{\R^2}e^{ix\cdot
\xi}\, e^{-i\frac{1}{k}(\xi_1^2-\xi_2^2)} \,\widehat{F}(\xi)\, d\xi.
\end{equation*}
This follows easily, making use of the distributional formula
$$
\frac{k}{4\pi}\int e^{ik\frac{z_1^2-z_2^2}{4}} \phi(z)\, dz =\int e^{-i\frac{1}{k}(\xi_1^2-\xi_2^2)} \widehat{\phi} (\xi)\, d\xi,
$$
which holds for Schwartz functions $\phi$.  We see that when $V$ is
a Schwartz function, $\T^k_{1}V$ solves the time-dependent
nonelliptic Schr\"odinger equation, $$i\partial_t u +\Box u=0,$$ where
$\Box =\partial_{x_1x_1}-\partial_{x_2x_2}$, with initial data $V$
at time $1/k$. When $V\in H^s$ with $s>1$, both $V$ and its Fourier
transform are integrable, and so both $\T^k_{1}V$ and
$e^{i\frac{1}{k}\Box}V$ are continuous functions which are again
equal pointwise. Thus, in the following lemma we obtain the convergence \eqref{ipo} and therefore complete the reconstruction
for potentials in $H^s$ with $s>1$.

\begin{lem}\label{balls} Let $V\in H^s$ with $1<s<3$.  Then
$$|e^{i\frac{1}{k} \square }V(x)- V(x)|\le Ck^{\frac{1-s}{2}}\|V\|_{H^{s}},\quad x\in \Omega.$$
\end{lem}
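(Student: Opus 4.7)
The plan is to reduce the pointwise bound to an $L^\infty$ bound of the Fourier multiplier operator with symbol $m_k(\xi) = e^{-i\frac{1}{k}(\xi_1^2-\xi_2^2)}-1$ applied to $V$, and then estimate by Cauchy--Schwarz in frequency. Since $V \in H^s$ with $s>1$, it is integrable and continuous after correction on a null set, so writing
\[ e^{i\frac{1}{k}\square}V(x)-V(x) = \frac{1}{(2\pi)^2}\int_{\R^2} e^{ix\cdot\xi}\Big(e^{-i\frac{1}{k}(\xi_1^2-\xi_2^2)}-1\Big)\widehat{V}(\xi)\,d\xi \]
is legitimate, and by Cauchy--Schwarz against the weight $\langle\xi\rangle^{s}$,
\[ |e^{i\frac{1}{k}\square}V(x)-V(x)| \le C\,\|V\|_{H^s}\left(\int_{\R^2} \frac{|e^{-i\frac{1}{k}(\xi_1^2-\xi_2^2)}-1|^2}{\langle\xi\rangle^{2s}}\,d\xi\right)^{1/2}. \]
The right-hand side is independent of $x$, so this reduces the lemma to showing that the integral is $O(k^{1-s})$.

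To bound the integral, I would split at the natural threshold where the phase becomes of size one. Let $R = k^{1/2}$. On $|\xi|\le R$, use $|e^{i\theta}-1|\le |\theta|$ with $\theta = \frac{1}{k}(\xi_1^2-\xi_2^2)$, so that $|e^{i\theta}-1|^2 \le k^{-2}|\xi|^4$. The resulting contribution is
\[ \frac{C}{k^2}\int_{|\xi|\le R}\frac{|\xi|^4}{\langle\xi\rangle^{2s}}\,d\xi \le \frac{C}{k^2}\int_0^R r^{5-2s}\,dr \le \frac{C\,R^{6-2s}}{k^2} = C\,k^{1-s}, \]
where the exponent $6-2s$ is positive precisely because $s<3$, which is what forces the claimed upper bound on $s$. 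On $|\xi|>R$, use the trivial bound $|e^{i\theta}-1|\le 2$ to get
\[ \int_{|\xi|>R}\frac{4}{\langle\xi\rangle^{2s}}\,d\xi \le C\int_R^\infty r^{1-2s}\,dr \le C\,R^{2-2s} = C\,k^{1-s}, \]
which converges because $s>1$. Summing the two contributions yields the bound $Ck^{1-s}$; taking the square root gives the stated $k^{(1-s)/2}$.

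There is no real obstacle here: the proof is a one-shot stationary-phase-free argument in which the two constraints $s>1$ and $s<3$ appear automatically as the integrability conditions at infinity and at the origin of the respective split pieces. The only subtlety is to ensure that the pointwise identification of $\T^k_1 V$ with $e^{i\frac{1}{k}\square}V$ is justified before using Fourier inversion, but this is immediate from $s>1$ which gives $\widehat V\in L^1$, and was noted just before the statement.
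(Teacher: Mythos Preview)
Your proof is correct and follows essentially the same approach as the paper's: Fourier inversion plus Cauchy--Schwarz against the $H^s$ weight, then control of the resulting multiplier integral by using $|e^{i\theta}-1|\le|\theta|$ where the phase is small and $|e^{i\theta}-1|\le 2$ where it is large. The only cosmetic difference is that the paper first rescales $\xi\mapsto k^{1/2}\xi$ to pull out the factor $k^{(1-s)/2}$ exactly and then splits at $|\xi|=1$, whereas you split directly at $|\xi|=k^{1/2}$; after the change of variables these are the same computation, and the constraints $s>1$ and $s<3$ appear in the same places.
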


\begin{proof}
By the Fourier inversion formula and the Cauchy--Schwarz inequality,
\begin{align*}
|e^{it \square }V(x)- V(x)|&=\frac{1}{(2\pi)^2}\Big|\int \widehat{V}(\xi)\, e^{i\xi\cdot x}\big( e^{-i\frac{1}{k}(\xi^2_1-\xi_2^2)}-1\big)\, d\xi\Big|\\
&\le \|V\|_{H^s} \Big(\int\frac{|e^{-i\frac{1}{k}(\xi^2_1-\xi_2^2)}-1|^2}{|\xi|^{2s}}d\xi\Big)^{1/2}\\
&= \|V\|_{H^s}  \Big(\int\frac{2-2\cos\big(\frac{1}{k}(\xi^2_1-\xi_2^2)\big)}{|\xi|^{2s}}d\xi\Big)^{1/2}\\
&= 2k^{\frac{1-s}{2}}\|V\|_{H^s}  \Big(\int\frac{\sin^2\big(\tfrac{1}{2}(\xi^2_1-\xi_2^2)\big)}{|\xi|^{2s}}d\xi\Big)^{1/2}\\
&\le 2k^{\frac{1-s}{2}}\|V\|_{H^s}
\Big(\int_{\mathbb{D}}\frac{1}{|\xi|^{2(s-2)}}d\xi+\int_{\R^2\backslash\mathbb{D}}\frac{1}{|\xi|^{2s}}\Big)^{1/2},
\end{align*}
where we have used the  trigonometric identity
$2\sin^2\theta=1-\cos2\theta$ and the fact that $\sin \theta\le
|\theta|$.
\end{proof}

Altogether we see that $|\T^k_{1+w} V(x)-V(x)|\le Ck^\frac{1-s}{2}$ for all $x\in \Omega$ and $V\in H^s$ with $1<s<3$,  which improves upon the decay rate of \cite{NS} where they recovered $C^2$ potentials. Note that there can be no decay rates, at least for the main term, for the potentials of $H^s$ with $s\le 1$ as they would then be uniform limits of continuous functions and thus continuous.

For discontinuous potentials we are no longer able to recover at each point. Instead we bound the fractal dimension of the sets where the recovery fails. This point of view has its origins in the work of Beurling  who bounded the capacity of the divergence sets of Fourier series \cite{beur} (see also \cite{BBCR}). Now Sobolev spaces are only defined modulo sets of zero Lebesgue measure, and so we consider first the potential spaces
$$L^{s,2}=\{\,I_s\ast g\,:\,g\in L^2(\R^2)\,\},$$ where $I_s$
is the Riesz potential $|\cdot|^{s-2}$. As
 $\widehat{I}_{\!s}(\xi)=C_s|\xi|^{-s}$, we have that $I_s\ast g$ is also a member of (an equivalence class  of) $H^s$.

To bound the dimension of the sets where the recovery fails, we will prove maximal estimates with respect to fractal measures.
We say that a positive Borel measure $\mu$ is $\alpha$-dimensional
if
\begin{equation}\label{def3}
c_{\alpha}(\mu):=\sup_{x\in\mathbb{R}^2,\,r>0}\frac{\mu\big(B(x,r)\big)}{r^{\alpha}}<\infty,\quad\quad
0\leq\alpha\leq 2,
\end{equation}
and denote by $\mathcal{M}^\alpha(\D)$ the
$\alpha$-dimensional probability measures which are supported in $\Omega$.
For $0<s<1$, we will require the elementary inequality
 \begin{equation}\label{element0}
\|I_s\ast g\|_{L^1(d\mu)}\ls \sqrt{c_\alpha(\mu)}\,\|g\|_{L^2(\R^2)},\quad \alpha>2-2s,
\end{equation}
which holds whenever $\mu\in \mathcal{M}^\alpha(\D)$ and $g\in L^2(\R^2)$. To see this, we note that by Fubini's
theorem and the Cauchy--Schwarz inequality,
\begin{align*}
\|I_{s}*g\|_{L^1(d\mu)}
&\le  \|I_s\ast \mu\|_{L^2}\|g\|_{L^2},
\end{align*}
so that \eqref{element0} follows by proving
$$
\|I_s\ast \mu\|^2_{L^2}\ls\, c_\alpha(\mu),\quad \alpha>2-2s.
$$
Now by Plancherel's theorem,
\begin{align*}
\|I_s\ast
\mu\|^2_{L^2}=(2\pi)^{-2}\|\widehat{I}_s\widehat{\mu}\|^2_{L^2}\ls\int \widehat{\mu}(\xi)\,\overline{\widehat{\mu}(\xi)}\,\widehat{I}_{2s}(\xi)\,d\xi&\ls\int \mu\ast I_{2s}(y)\,d\mu(y)\\
&= \int\!\! \int \frac{d\mu(x)d\mu(y)}{|x-y|^{2-2s}},
\end{align*}

which is nothing more than the $(2-2s)$-energy. Then, by an appropriate
dyadic decomposition,
\begin{align*}\label{energy}
\int\!\! \int\frac{d\mu(x)d\mu(y)}{|x-y|^{2-2s}}&\ls \int
\sum_{j=0}^\infty c_\alpha(\mu) 2^{-j\alpha}2^{j(2-2s)} d\mu(y)\ls\, c_\alpha(\mu)
\end{align*}
whenever  $\alpha>2-2s$ and $\mu\in\mathcal{M}^\alpha(\D)$.

The Fourier transform of less regular potentials $V$ is not
necessarily integrable,  and so in that case $e^{i\frac{1}{k}\Box}V$
is not even well-defined. Instead we make do with the pointwise
limit
\begin{equation}\label{pointlim}
\T^k_1[V](x)=\lim_{N\to \infty} G_N\ast \T^k_1[V](x)=\lim_{N\to \infty} e^{i\frac{1}{k}\Box}[G_N\ast V](x),\quad x\in \D,
\end{equation}
where $G_N=N^{2}G(N\cdot)$ and $G$ is the Gaussian $e^{-|\cdot|^2}$. This formula holds as~$V$ is compactly supported and integrable; conditions which the initial data in the time-dependent theory does not normally satisfy.
We will also
require the following lemma due, in this form,  to Sj\"olin
\cite{Sj0}.

\begin{lem}\label{lem2}\cite{Sj0}
Let  $x,t\in\R$, $\gamma\in [1/2,1)$ and $N\geq 1$. Then
$$
\left|\int_\R \frac{\eta(N^{-1}\xi)\,e^{
i(x\xi-t\xi^2)}}{|\xi|^{\gamma}}\,d\xi\right| \ls
\frac{1}{|x|^{1-\gamma}},
$$
where the constant implied by the symbol $\ls$ depends only on $\gamma$ and the
Schwartz function $\eta$.
\end{lem}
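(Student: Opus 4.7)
The task is to bound
\[I(x,t,N):=\int_\R\eta(N^{-1}\xi)|\xi|^{-\gamma}e^{i(x\xi-t\xi^2)}\,d\xi\]
by $|x|^{\gamma-1}$ uniformly in $t\in\R$ and $N\ge 1$. By the symmetry $\xi\mapsto-\xi$ I may assume $x>0$; denote $\phi(\xi)=x\xi-t\xi^2$, so that $\phi'(\xi)=x-2t\xi$ and $\phi''(\xi)=-2t$, with unique stationary point $\xi_{*}=x/(2t)$ when $t\neq 0$. The approach is the standard oscillatory integral decomposition: trivial bound in the low-frequency zone, integration by parts in the non-stationary high-frequency zone, and van der Corput's second derivative lemma in a window about $\xi_{*}$.

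First, I split $I=I_{\textrm{low}}+I_{\textrm{high}}$ along $|\xi|=1/x$. The low-frequency term satisfies
\[|I_{\textrm{low}}|\le\int_{|\xi|\le 1/x}|\xi|^{-\gamma}\,d\xi\ls x^{\gamma-1},\]
using only $\gamma<1$, which already matches the target bound.

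Next I split $I_{\textrm{high}}$ according to whether $|t|\le x^2$ or $|t|\ge x^2$. When $|t|\ge x^2$ the stationary point $\xi_{*}$ falls inside the low-frequency region, and on $|\xi|\ge 1/x$ van der Corput's second derivative lemma combined with the monotonicity of $|\xi|^{-\gamma}\eta(N^{-1}\xi)$ yields
\[|I_{\textrm{high}}|\ls |t|^{-1/2}\sup_{|\xi|\ge 1/x}|\xi|^{-\gamma}\ls |t|^{-1/2}x^{\gamma}\ls x^{\gamma-1},\]
the last inequality using $|t|\ge x^2$. When $|t|\le x^2$ the point $\xi_{*}$ lies in (or just outside) the high-frequency zone; I would carve out a window $W$ of width $\min\{|t|^{-1/2},\xi_{*}/2\}$ about $\xi_{*}$, treat $W$ by van der Corput (since there $|\xi|^{-\gamma}\sim(|t|/x)^{\gamma}$ is essentially constant and $|\phi''|=2|t|$), and handle its complement via integration by parts with $L=(i\phi')^{-1}\partial_{\xi}$; the lower bound on $|\phi'|$ outside $W$ makes the boundary and interior terms controllable by $x^{\gamma-1}$.

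The main obstacle is the sharp intermediate regime $|t|\sim x^{2}$, where the window and non-stationary estimates just meet. In that regime the van der Corput contribution on $W$ equals $|t|^{\gamma-1/2}x^{-\gamma}$, and requiring this to be $\ls x^{\gamma-1}$ for all $|t|\le x^{2}$ is equivalent, given the monotonicity in $|t|$, to $\gamma\ge 1/2$; this is precisely the point where the hypothesis is consumed. Uniformity in $N\ge 1$ is automatic because $\eta(N^{-1}\cdot)$ is $C^{\infty}$ with all derivatives bounded uniformly in $N$, so it does not inflate the monotone-amplitude or bounded-variation constants appearing in van der Corput or integration by parts.
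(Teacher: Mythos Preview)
The paper does not prove this lemma; it quotes the result from Sj\"olin \cite{Sj0} and uses it as a black box. So there is no argument in the paper to compare yours against, and your sketch is in fact more than the paper supplies.

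Your approach---trivial estimate on $|\xi|\le 1/x$, van der Corput's second-derivative lemma near the stationary point, and integration by parts against $\phi'$ elsewhere---is the standard one and is essentially correct. Two small points are worth tightening. First, the amplitude $\eta(N^{-1}\xi)|\xi|^{-\gamma}$ is not literally monotone for a general Schwartz function $\eta$; what you actually need (and what your argument implicitly uses) is that its total variation on $\{|\xi|\ge 1/x\}$ is $\lesssim x^{\gamma}$ uniformly in $N\ge 1$, which follows since $\int N^{-1}|\eta'(N^{-1}\xi)|\,|\xi|^{-\gamma}\,d\xi\le x^{\gamma}\|\eta'\|_{L^1}$ and $\int_{1/x}^{\infty}\gamma\xi^{-\gamma-1}\,d\xi=x^{\gamma}$. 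Second, in the regime $|t|\le x^{2}$ you only assert that the non-stationary piece is ``controllable''; it is, but the verification requires checking both the boundary terms and the two interior terms $\int|\psi'/\phi'|$ and $\int|\psi\phi''|/(\phi')^{2}$ separately, and the latter produces a factor $|t|^{\gamma}x^{-\gamma-1}$ (possibly with a harmless logarithm) that again needs $|t|\le x^{2}$ and $\gamma\ge 1/2$ to close. With those details filled in, your argument is complete.
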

In the following theorem, we employ the
Kolmogorov--Seliverstov--Plessner method, as used by
Carleson~\cite{C} for the one-dimensional Schr\"odinger equation.
Dahlberg and Kenig~\cite{DK} proved that the result of Carleson is
sharp and noted that his argument could be applied to the higher
dimensional problem (for which the argument is no longer sharp for the elliptic equation, see \cite{Bo}). We refine their argument, which extends to the nonelliptic case, by proving estimates which hold uniformly with respect to fractal measures.

\begin{thm}\label{it} Let $1/2\le s<1$. Then
$$
\big\|\,\sup_{k\ge 1}\sup_{N\ge 1} |e^{i\frac{1}{k}\Box}[G_N\ast I_s\ast g]|\,\big\|_{L^1(d\mu)} \ls \sqrt{c_\alpha(\mu)} \|g\|_{L^2(\R^2)}, \quad \alpha>2-s,
$$
whenever $\mu\in \mathcal{M}^\alpha(\D)$ and $g\in L^2$. 
\end{thm}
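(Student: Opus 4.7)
The plan is to apply the classical Kolmogorov--Seliverstov--Plessner linearization followed by a $TT^*$ duality argument, reducing the problem to a kernel estimate that can be handled via Sj\"olin's Lemma~\ref{lem2}. First, I would pick measurable functions $k, N: \Omega \to \mathbb{N}$ nearly realizing the pointwise supremum, and define the linear operator
$$Tg(x) = e^{i\frac{1}{k(x)}\Box}\big[G_{N(x)}\ast I_s\ast g\big](x),\qquad x\in\Omega.$$
By duality $(L^1(d\mu))^*=L^\infty(d\mu)$, the desired bound for $T:L^2\to L^1(d\mu)$ is equivalent to $\|T^*(h\,d\mu)\|_{L^2} \ls \sqrt{c_\alpha(\mu)}\|h\|_{L^\infty(d\mu)}$.

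Squaring and applying the $TT^*$ method gives
$$\|T^*(h\,d\mu)\|_{L^2}^2 = \iint K(x,y)\,h(x)\overline{h(y)}\,d\mu(x)\,d\mu(y),$$
where
$$K(x,y) = C\int_{\R^2} e^{i(x-y)\cdot\xi - i\tau(\xi_1^2-\xi_2^2)}\,\widehat{G}_{N(x)}(\xi)\,\overline{\widehat{G}_{N(y)}(\xi)}\,|\xi|^{-2s}\,d\xi,$$
with $\tau = 1/k(x) - 1/k(y)$. The crucial structural feature here, inherited from the hyperbolic signature of $\Box$, is that the phase splits as $(a_1\xi_1 - \tau\xi_1^2) + (a_2\xi_2 + \tau\xi_2^2)$ with $a=x-y$, and (for $G$ Gaussian) the cutoffs $\widehat{G}_{N}$ factorize as products in $\xi_1$ and $\xi_2$.

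The heart of the proof is bounding $|K(x,y)|$. I would decompose the weight $|\xi|^{-2s}$ dyadically in each coordinate (using, e.g., $|\xi|^{-2s} \ls |\xi_1|^{-s}|\xi_2|^{-s}$ together with a Littlewood--Paley decomposition of each factor), and in each dyadic piece apply Sj\"olin's Lemma~\ref{lem2} with $\gamma = s \in [1/2,1)$ once in $\xi_1$ and once in $\xi_2$. Summing the dyadic pieces, I expect to reach a kernel bound essentially of the form
$$|K(x,y)|\ls \frac{1}{|x-y|^{2-s}},$$
uniformly in $\tau$ and the cutoff scales. Feeding this into the bilinear form and invoking the $\alpha$-dimensionality of $\mu$ precisely as in the derivation of \eqref{element0}, the double integral is controlled by $c_\alpha(\mu)\|h\|_{L^\infty(d\mu)}^2$ whenever $\alpha>2-s$.

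The principal obstacle I foresee is the kernel estimate itself: the weight $|\xi|^{-2s}$ is not a tensor product in $\xi_1, \xi_2$, so a delicate frequency decomposition is required to uncouple the two variables into objects amenable to the one-dimensional Sj\"olin lemma, and the geometric losses must be balanced carefully so that the sums converge. The regularity threshold $s\geq 1/2$ is forced both by the admissible range of Sj\"olin's lemma and by the convergence of these dyadic sums, while the nonelliptic signature of $\Box$ is essential, since without it the phase would not separate and the coordinatewise application of the one-dimensional lemma would be unavailable.
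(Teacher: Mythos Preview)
Your linearization and $TT^*$ reduction match the paper exactly, and the separation of variables via $|\xi|^{-2s}\le |\xi_1|^{-s}|\xi_2|^{-s}$ followed by Sj\"olin's lemma in each coordinate is also precisely what the paper does. However, your expected kernel bound
\[
|K(x,y)|\ls \frac{1}{|x-y|^{2-s}}
\]
is incorrect, and this is a genuine gap. Applying Lemma~\ref{lem2} with $\gamma=s$ once in $\xi_1$ and once in $\xi_2$ yields the \emph{anisotropic} bound
\[
|K(x,y)|\ls \frac{1}{|x_1-y_1|^{1-s}\,|x_2-y_2|^{1-s}},
\]
which is \emph{not} dominated by $|x-y|^{-(2-s)}$: the product kernel blows up along the entire axis-parallel cross $\{x_1=y_1\}\cup\{x_2=y_2\}$, not just at the diagonal point $x=y$. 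Consequently the reduction to the standard Riesz energy as in \eqref{element0} is unavailable.

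This is exactly where the real work of the paper lies. To bound
\[
\iint \frac{d\mu(x)\,d\mu(y)}{|x_1-y_1|^{1-s}\,|x_2-y_2|^{1-s}}\ \ls\ c_\alpha(\mu),\qquad \alpha>2-s,
\]
the paper first observes that $\mu$ gives zero mass to axis-parallel lines (since $\alpha>1$), and then performs a dyadic \emph{rectangle} decomposition around each $y$, covering the thin rectangles by balls of the shorter sidelength to exploit $c_\alpha(\mu)$. Summing the resulting geometric series is what forces the threshold $\alpha>2-s$. Your proposal omits this step entirely; without it the argument does not close.
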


\begin{proof}
By linearising, it will suffice to prove
\begin{equation}\label{linenergy}
\left|\int_{\D}e^{it(x)\Box}[G_{N(x)}\ast I_s\ast g]\,w(x)\,
d\mu(x)\right|^2\ls c_\alpha(\mu)\,\|g\|^2_{L^2},\quad \alpha >2-s,
\end{equation}
uniformly in measurable functions $t:\D\to\R$, $N:\D\to \mathbb{N}$
and $w:\D\to \mathbb{D}$. By Fubini's theorem and the
Cauchy--Schwarz inequality, the left-hand side of \eqref{linenergy}
is bounded by
\begin{equation*}
\int |\widehat{g}(\xi)|^2d\xi \int \left|\int
G\Big(\frac{\xi}{N(x)}\Big)\,e^{it(x)(\xi_1^2-\xi_2^2)}e^{ ix\cdot\xi}w(x)\, d\mu(x)
\right|^2\frac{d\xi}{|\xi|^{2s}}.
\end{equation*}
  Writing the squared integral as a double
integral, and applying Fubini's theorem again, it will suffice to show~that
\begin{align}
\!\!\!\!\int\!\!\!\!\int\!\!\!\!\int
\!G\Big(\frac{\xi}{N(x)}\Big)\,G\Big(\frac{\xi}{N(y)}\Big)\,e^{i(t(x)-t(y))(\xi_1^2-\xi_2^2)}&e^{
i(x-y)\cdot\xi}\frac{d\xi}{|\xi|^{2s}}\times\nonumber\\
&\, w(x)w(y)\,d\mu(x)d\mu(y)\ls
c_\alpha(\mu)\label{polpo}\end{align} uniformly in the functions
$t$, $N$ and $w$. Now, as $|\xi|^{2s}\ge|\xi_1|^{s}|\xi_2|^{s}$, the
left-hand side of \eqref{polpo} is bounded by
\begin{align*}
\prod_{j=1}^2\Big|\int
G\Big(\frac{\xi_j}{N(x)}\Big)\,G\Big(\frac{\xi_j}{N(y)}\Big)\,e^{i(-1)^{j+1}(t(x)-t(y))\xi_j^2}
&e^{i(x_j-y_j)\xi_j}\frac{d\xi_j}{|\xi_j|^{s}}\Big|\times\\
&\, w(x)w(y)\,d\mu(x)d\mu(y),
\end{align*}
and by Lemma~\ref{lem2}, we have
$$
\left|\int \!\frac{G\Big(\frac{\xi_j}{N(x)}\Big)\,G\Big(\frac{\xi_j}{N(y)}\Big)\,e^{
i(-1)^{j+1}(t(x)-t(y))\xi_j^2}e^{
i(x_j-y_j)\xi_j}}{|\xi_j|^{s}}d\xi_j\right|\ls\frac{1}{|x_j-y_j|^{1-s}}.
$$
Substituting in, we see that the left-hand side of (\ref{polpo}) is
bounded by
\begin{align}\label{itt}
C\!\int\!\!
\int\frac{|w(x)w(y)|d\mu(x)d\mu(y)}{|x_1-y_1|^{1-s}|x_2-y_2|^{1-s}}&\le C\!\int\!\!
\int\frac{d\mu(x)d\mu(y)}{|x_1-y_1|^{1-s}|x_2-y_2|^{1-s}}
\end{align}

To complete the proof, we are required to bound \eqref{itt} by
$c_\alpha(\mu)$. This will require a dyadic
decomposition which lends itself to the singularities along the axis-parallel lines $A_y$ defined by
$$
A_y=\{ x\in\D\,:\, x_1=y_1\quad \text{or}\quad x_2=y_2\big\},\quad y\in\D.
$$
Covering $A_y$ by balls $\{B_j\}_{j\ge1}$ of radius $r_j$ and using the definition \eqref{def3} of $c_\alpha(\mu)$, we have
$$
\mu(A_y)\le \sum_{j\ge 1} \mu (B_j)\le c_{\alpha}(\mu) \sum_{j\ge 1} r_j^\alpha.
$$
Taking the infimum over all such coverings and using the fact that the
$\alpha$-Hausdorff measure of $A_y$ is zero when $\alpha>1$, we see that $\mu(A_y)=0$ for all $\mu \in \mathcal{M}^\alpha(\D)$. Thus we can ignore the sets $A_y$ when decomposing the inner integral of  \eqref{itt}.

For each $j,\ell\in\mathbb{Z}$ we break up $Q\supset\Omega$ into
dyadic rectangles of dimensions
$2^{-j}\times2^{-\ell}$ and consider the unique rectangle $R_{j,\ell}$ which contains $y$. We call the unique
rectangles $R_{j-1,\ell-1}, \,\,R_{j-1,\ell},$ and
$R_{j,\ell-1}$ that contain $R_{j,\ell}$, the mother, the
father, and the stepfather respectively. We write $R_{j,\ell}^n\sim
R_{j,\ell}$ if their mothers touch, but their fathers and
stepfathers do not. As $\mu(A_y)=0$,  we can write
 $$
 \int F(x,y)\,d\mu(x)=\sum_{j,\ell\ge 0}\sum_{\ n: R_{j,\ell}^{n}\sim R_{j,\ell}}\int_{R_{j,\ell}^{n}} F(x,y)\,d\mu(x),
 $$
\[\begin{picture}(128,128)
\put(0,0){\line(0,1){128}} \put(0,0){\line(1,0){128}}
\put(128,0){\line(0,1){128}}\put(0,128){\line(1,0){128}}
\put(48.5,51){$\cdot$} \put(53,52){$y$}
\put(105,108){$R^{n}_{j,\ell}$}

\put(0,16){\line(1,0){44}}
\put(0,32){\line(1,0){44}}
\put(0,40){\line(1,0){44}}
\put(0,44){\line(1,0){44}}
\put(0,48){\line(1,0){44}}
\put(0,60){\line(1,0){44}}
\put(0,64){\line(1,0){44}}
\put(0,72){\line(1,0){44}}
\put(0,80){\line(1,0){44}}
\put(0,96){\line(1,0){44}}
\put(0,128){\line(1,0){44}}

\put(56,16){\line(1,0){72}}
\put(56,32){\line(1,0){72}}
\put(56,40){\line(1,0){72}}
\put(56,44){\line(1,0){72}}
\put(56,48){\line(1,0){72}}
\put(56,60){\line(1,0){72}}
\put(56,64){\line(1,0){72}}
\put(56,72){\line(1,0){72}}
\put(56,80){\line(1,0){72}}
\put(56,96){\line(1,0){72}}
\put(56,128){\line(1,0){72}}

\put(16,0){\line(0,1){48}}
\put(32,0){\line(0,1){48}}
\put(40,0){\line(0,1){48}}
\put(44,0){\line(0,1){48}}
\put(56,0){\line(0,1){48}}
\put(60,0){\line(0,1){48}}
\put(64,0){\line(0,1){48}}
\put(72,0){\line(0,1){48}}
\put(80,0){\line(0,1){48}}
\put(96,0){\line(0,1){48}}
\put(128,0){\line(0,1){48}}

\put(16,60){\line(0,1){68}}
\put(32,60){\line(0,1){68}}
\put(40,60){\line(0,1){68}}
\put(44,60){\line(0,1){68}}
\put(56,60){\line(0,1){68}}
\put(60,60){\line(0,1){68}}
\put(64,60){\line(0,1){68}}
\put(72,60){\line(0,1){68}}
\put(80,60){\line(0,1){68}}
\put(96,60){\line(0,1){68}}
\put(128,60){\line(0,1){68}}
\put(-47.5,-18){{\tiny \bf The rectangles of dimensions $2^{-j}\times 2^{-\ell}$, with $1\le j,\ell\le 3$,}}
\put(1.5,-28){{\tiny \bf associated with a single point $y$.}}
\end{picture}
\vspace{2em}
\]
which yields
$$
\eqref{itt}\ \le\, C\!\int \sum_{j,\ell\ge 0}\sum_{\ n: R_{j,\ell}^{n}\sim R_{j,\ell}}2^{j(1-s)}
2^{\ell(1-s)}\mu(R_{j,\ell}^{n})\,d\mu(y).
$$
Without loss of generality, we can suppose that
\begin{align*}
\sum_{\ell>j\ge 0}\sum_{\ n: R_{j,\ell}^{n}\sim R_{j,\ell}}2^{j(1-s)} 2^{\ell(1-s)}\mu(R_{j,\ell}^{n})
\le \sum_{j\ge \ell\ge
0}\sum_{\ n: R_{j,\ell}^{n}\sim R_{j,\ell}}2^{j(1-s)} 2^{\ell(1-s)}\mu(R_{j,\ell}^{n}),
\end{align*}
so that
$$
\eqref{itt}\ \le\, C\!\int \sum_{j\ge \ell\ge
0}\sum_{\ n: R_{j,\ell}^{n}\sim R_{j,\ell}}2^{j(1-s)}
2^{\ell(1-s)}\mu(R_{j,\ell}^{n})\,d\mu(y).
$$
Now by covering each rectangle by discs of radius $2^{-j}$, and using the definition \eqref{def3} of $c_{\alpha}(\mu)$, we see that
$$
\mu(R_{j,\ell}^{n})\ \ls\  2^{j-\ell} c_{\alpha}(\mu) 2^{-j\alpha},
$$
and for each rectangle $R_{j,\ell}$ there are exactly nine
rectangles $R_{j,\ell}^{n}$ which satisfy $R_{j,\ell}^n\sim
R_{j,\ell}$. Thus
\begin{align*}
\eqref{itt}\ &\ls\  c_{\alpha}(\mu) \sum_{j\ge \ell\ge 0} 2^{j(2-s-\alpha)}2^{-\ell s}\ \ls\  c_{\alpha}(\mu),
\end{align*}
when $\alpha>2-s$, and so we are done. 
\end{proof}

\noindent{\it Proof of Theorem~\ref{dim2}.} By Alessandrini's identity \eqref{ale} and Frostman's lemma (see for example \cite{Mat}), it will suffice to prove that
\begin{equation}\label{zero}
\mu\Big\{ x \,:\,  \limsup_{k\to \infty} |\T^k_{1+w}[V](x)-V(x)|\neq 0\Big\}=0
\end{equation}
whenever $\mu\in \M^\alpha(\D)$ and $V\in L^{s,2}(\D)$ with
$\alpha>2-s$. By Theorem~\ref{Remainder} and~\eqref{pointlim}, this
would follow from
$$
\mu\Big\{ x \,:\,  \limsup_{k\to \infty} \limsup_{N\to \infty}|
e^{i\frac{1}{k}\Box}[G_N\ast V](x)-V(x)|\neq 0\Big\}=0.
$$
Writing $V=I_s\ast g$, where $g\in L^2$, we take a Schwartz function
$h$  so that $\|g-h\|_{L^2}<\epsilon$. Then
\begin{align*}
&\ \mu\Big\{\, x \,:\,  \limsup_{k\to \infty}\limsup_{N\to \infty} |e^{i\frac{1}{k}\Box} [G_N \ast V](x)-V(x)|>\la\,\Big\}\\
\le&\,\, \mu\Big\{\, x \,:\,  \sup_{k\ge1}\sup_{N\ge 1} |e^{i\frac{1}{k}\Box} [G_N \ast I_s\ast (g-h)](x)|>\la/3\,\Big\}\,+\\
&\,\, \mu\Big\{\, x \,:\,  \limsup_{k\to \infty}\limsup_{N\to \infty} |e^{i\frac{1}{k}\Box} [G_N \ast I_s\ast h](x)-I_s\ast h(x)|>\la/3\,\Big\}\,+\\
&\,\, \mu\Big\{\, x \,:\,  |I_s\ast (h-g)(x)|>\la/3\,\Big\}.
\end{align*}
As the terms involving $h$ are continuous in all parameters, the second set of the three is empty, so by the elementary inequality \eqref{element0} and Theorem~\ref{it}, we see that
\begin{align*}
\mu\Big\{ x \,:\,  \limsup_{k\to \infty}
|\T^k_{1+w}V(x)-V(x)|>\la\Big\}&\ls
\lambda^{-1}\sqrt{c_\alpha(\mu)}\,\|g-h\|_{L^2}\\
&\ls \lambda^{-1}\sqrt{c_\alpha(\mu)}\,\epsilon,
\end{align*}
for all $\epsilon>0$, which yields \eqref{zero}, and so we are done.\hfill$\Box$

\vspace{1em}

\noindent{\it Proof of Theorem~\ref{names}.} This follows by applying Corollary~\ref{dim} to the potential $q=V-\kappa^2\chi_\Omega$. For $V\in H^{1/2}$, the potentials
$q=V-\kappa^2\chi_\Omega$ are contained in  $\dot{H}^s$ for $0<s< 1/2$ (see for example \cite{FR}) and
so we find Bukhgeim  solutions $U_{k,x}$, associated to $q$,  and
recover their value on the boundary as before. However, Corollary~\ref{dim}
requires the potential $q$ to be contained in $H^{1/2}$ which is not
satisfied for any domain. However, it is clear from the proof of
Theorem~\ref{it} that we can relax this condition further to
$$\big\|\big(i\tfrac{\partial}{\partial_{x_1}}\big)^{1/4}\big(i\tfrac{\partial}{\partial_{x_2}}\big)^{1/4}q\big\|_{L^2(\R^2)}<\infty,$$
which is satisfied when $\Omega$ is a axis-parallel square, but not when it is a disc.~\hfill$\Box$

\begin{rem}
As in the previous sections we can consider potentials which are not compactly supported. Here we can recover the potentials on $\Omega$  if $V\in H^{s}$ with $s>3/4$. Indeed, the arguments of this section require that
$$\big\|\big(i\tfrac{\partial}{\partial_{x_1}}\big)^{1/4}\big(i\tfrac{\partial}{\partial_{x_2}}\big)^{1/4}(\chi_\Omega V)\big\|_{L^2(\R^2)}<\infty,$$
for which it is again convenient to take $\Omega$ to be an axis-parallel square. Then arguing as in Remark~\ref{rem}, by the fractional Leibnitz rule,
$$
\big\|\big(i\tfrac{\partial}{\partial_{x_2}}\big)^{1/4}(\chi_\Omega V)(x_1,\cdot)\big\|_{L^2(\R)}\le \|\chi_\Omega(x_1,\cdot)\|_4\|\big(i\tfrac{\partial}{\partial_{x_2}}\big)^{1/4}V(x_1,\cdot)\|_4
$$

 By factorising the integral using Fubini's theorem and applying the argument of Remark~\ref{rem} in the $x_2$-variable, this holds if 
$$\big\|\big(i\tfrac{\partial}{\partial_{x_1}}\big)^{1/4}\big(i\tfrac{\partial}{\partial_{x_2}}\big)^{s_0}V\big\|_{L^2(\R^2)}<\infty,$$
with $s_0>1/2$. Thus if a noncompactly supported potential is in $H^{s}$ with $s>3/4$, we can recover it on any  compact domain.
\end{rem}

\vspace{1em}

Finally we note that the uniqueness result of Bl\aa sten \cite{Blastenpp} can be observed using the connection with the time-dependent Schr\"odinger equation. Indeed if the scattering data or boundary measurements are the same for two potentials $V_1$ and $V_2$, then by Alessandrini's identity~\eqref{ale},
$$
\|V_2-V_1\|_{L^2}=\|V_2-\T^k_{1+w} V_2+\T^k_{1+w} V_1-V_1\|_{L^2},
$$
so that by the triangle inequality and Lemma~\ref{Remainder}, it suffices to prove 
$$
\|V-\T^k_{1} V\|_{L^2}\to 0\quad \textrm{as}\quad k\to \infty,
$$
which is a well-known property of the Schr\"odinger flow.

\section{Proof of Theorem \ref{sharpness}}\label{sharp}
First we construct a real potential $V$, supported in $\D$, and  contained in $H^s$ with $s<1/2$, for which
\begin{align*}
\Big|\Big\{ x \in\D\,:\, \lim_{k\to \infty}e^{i\frac{1}{k}\Box}[V](x) \not\to V(x)\Big\}\Big|\neq 0.
\end{align*}
Throughout this section we work with a different set of coordinates
from the previous sections. Indeed, for Schwartz functions $F$, we
now write
$$
e^{it\Box}[F](x)= \frac{1}{(2\pi)^2}\int e^{ix\cdot \xi}
e^{-i2t\xi_1\xi_2}\, \widehat{F}(\xi)\,   d\xi.
$$
Let $\phi_\text{o}$ be a positive Schwartz function, compactly
supported in $[-1/4,1/4]$, and consider $\phi=\phi_\text{o}\ast
\phi_\text{o}$, which is supported in $[-1/2,1/2]$. Note that
$\widehat{\phi}=(\widehat{\phi}_\text{o})^2\ge 0$. We consider the
potential $V$ defined by
\begin{align*}V(x)&=\sum_{j\ge 2}V_j(x)=\sum_{j\ge2} 2^{(1-\beta) j+1} \cos({2^j x_2}) \phi(2^{j}x_1)\phi(x_2)\\ &=   \sum_{j\ge2} 2^{(1-\beta) j}e^{i2^j x_2} \phi(2^{j}x_1)\phi(x_2)+\sum_{j\ge2} 2^{(1-\beta) j}e^{-i2^j x_2} \phi(2^{j}x_1)\phi(x_2)\\ &= \sum_{j\ge 2}V^+_j(x)+\sum_{j\ge
2}V^-_j(x),
\end{align*}
which is supported in
$[-\tfrac{1}{8},\tfrac{1}{8}]\times[-\tfrac{1}{2},\tfrac{1}{2}]$. If
$\beta\in (1/2+s,1)$, by changes of variables,
\begin{align*}
\|V\|^2_{H^s}
&\le C\sum_{j\ge 2} 2^{(1-2\beta+2s) j} \int |\widehat{\phi}(\xi_1)\widehat{\phi}(\xi_2)|^2\,(1+|\xi|^2)^{s}d\xi<\infty.
\end{align*}
Thus $V$ is finite almost everywhere, and we will show that
$
e^{i\frac{1}{k}\Box}V
$
diverges on $[\tfrac{1}{16},\tfrac{1}{4}]\times[-\tfrac{1}{16},\tfrac{1}{16}].$

This potential is an adaptation of an initial datum for the time-dependent nonelliptic Schr\"odinger equation considered in \cite{RVV}. The initial datum there was not real, the diverging sequence of time was allowed to depend on the point $x$, and more crucially, the initial datum was not compactly supported. Thus our arguments will have a different flavour, working on the frequency and spatial side simultaneously.

By changes of variables and the Fourier inversion formula,
\begin{align*}
e^{it\Box}[V_j^+](x)&= \frac{2^{(1-\beta) j}e^{i 2^jx_2}}{(2\pi)^2}\int \widehat{\phi}(\xi_1)\widehat{\phi}(\xi_2)\, e^{-i2^{j+1}t\xi_1\xi_2} e^{i(2^{j}\xi_1(x_1-2^{j+1}t)+\xi_2x_2)} d\xi\\
&=\frac{2^{(1-\beta) j}e^{i 2^jx_2}}{2\pi}\int
\phi\big(2^j(x_1-2^{j+1}t -2t\xi_2)\big)\widehat{\phi}(\xi_2)\,
e^{i\xi_2x_2} d\xi_2.
\end{align*}
Taking $t=1/k$ with $k$ the nearest natural number to $2^{j+1}/x_1$,
\begin{align*}
e^{i\frac{1}{k}\Box}[V_j^+](x) &= \frac{2^{(1-\beta) j}e^{i
2^jx_2}}{2\pi}\int
\phi\big(\zeta(x_1,j)-\tfrac{2^{j+1}}{k}\xi_2\big)\widehat{\phi}(\xi_2)\,
e^{i\xi_2x_2} d\xi_2,
\end{align*}
where $|\zeta(x_1,j)|\le \tfrac{1}{4}$ when  $x_1\in
[\tfrac{1}{16},\tfrac{1}{4}]$, so that, using the compact support
of~$\phi$, we see that
\begin{align*}
|e^{i\frac{1}{k}\Box}[V_j^+](x)|&=
\Big|\frac{2^{(1-\beta) j}}{2\pi}\int_{-16}^{16} \phi\big(\zeta(x_1,j)-\tfrac{2^{j+1}}{k}\xi_2\big)\widehat{\phi}(\xi_2)\, e^{i\xi_2x_2} d\xi_2\Big|
\\
&\ge \frac{2^{(1-\beta) j}}{2\pi} \Big |\int_{-16}^{16} \phi\big(\zeta(x_1,j)-\tfrac{2^{j+1}}{k}\xi_2\big)\widehat{\phi}(\xi_2)\, \cos(\xi_2x_2)\, d\xi_2\Big|.
\end{align*}
Now when $x_2\in [-\tfrac{1}{16},\tfrac{1}{16}]$, we have
$|\xi_2x_2|\le 1$, so that $|\cos (\xi_2 x_2)|>\cos(1)$. Using the
fact that $\phi$ and $\widehat{\phi}$ are nonnegative, we obtain
\begin{align*}
|e^{i\frac{1}{k}\Box}[V_j^+](x)| & \ge \frac{2^{(1-\beta) j}
\cos(1)}{2\pi}  \int_{-16}^{16}
\phi\big(\zeta(x_1,j)-\tfrac{2^{j+1}}{k}\xi_2\big)\widehat{\phi}(\xi_2)\,
d\xi_2
\\
&\ge C_12^{(1-\beta) j}.
\end{align*}

It remains to bound from above  the solution associated to the other
pieces of the potential. Again, by the Fourier inversion formula,
\begin{align*}
|e^{i\frac{1}{k}\Box}[V^\pm_{\ell}](x)|&= \frac{2^{(1-\beta) {\ell}}}{(2\pi)^2}\Big|\int \widehat{\phi}(\xi_1)\widehat{\phi}(\xi_2)\, e^{-i\frac{2}{k}\xi_1\xi_2} e^{i(2^{{\ell}}\xi_1(x_1 \mp \frac{2^{{\ell+1}}}{k})+\xi_2x_2)} d\xi\Big|\\
&= \frac{2^{(1-\beta) {\ell}}}{2\pi}\Big|\int \phi\big(2^{\ell}( x_1\mp \tfrac{2^{\ell+1}}{k}-\tfrac{2}{k}\xi_2)\big)\widehat{\phi}(\xi_2)\, e^{i\xi_2x_2} d\xi_2 \Big|.
\end{align*}
Using the fact that $\phi(y)\le C|y|^{-1/2}$, we obtain
$$
|e^{i\frac{1}{k}\Box}[V_{\ell}^\pm ](x)|\le C2^{(1/2-\beta) {\ell}}\int \frac{|\widehat{\phi}(\xi_2)|}{|x_1 \mp \tfrac{2^{\ell+1}}{k}-\tfrac{2}{k}\xi_2|^{1/2}}\,d\xi_2.
$$
Taking $0<\epsilon<\min\{1/4,1-\beta\}$, and using the rapid decay of $\widehat{\phi}$, we see that
\begin{align*}
|e^{i\frac{1}{k}\Box}[V_{\ell}^\pm](x)|\le C2^{(1/2-\beta) {\ell}}\Big(\int_{|\xi_2|< 2^{\epsilon j}} \frac{1}{|x_1 \mp \tfrac{2^{\ell+1}}{k}-\tfrac{2}{k}\xi_2|^{1/2}}\,d\xi_2 + C2^{-j}\Big).
\end{align*}
Now one can check that when $\ell\neq j$ or $j=\ell$ and $\mp$ is an addition,
$$
|\tfrac{2}{k}\xi_2|\le\tfrac{3}{4}|x_1\mp\tfrac{2^{\ell+1}}{k}|
$$
when $|\xi_2|\le 2^{j\epsilon}$. Indeed, when $j>\ell$, the
left-hand side is less than $\frac{1}{4}|x_1|$ which is less than
the right-hand side. On the other hand, when $j<\ell$ or $j=\ell$
and $\mp$ is an addition, the left-hand side is less than
$\frac{1}{2}|x_1|$ which is less than the right-hand side. Thus,
the integrand of the final integral is nonsingular so that the
integral is bounded by $C|x_1|^{-1/2}2^{\epsilon j}\le C2^{\epsilon
j}$.

By summing a geometric series in ${\ell}$, we obtain
\begin{align*}
\Big|\sum_{{\ell}\neq j} e^{i\frac{1}{k}\Box}V^\pm_{\ell}(x)+  e^{i\frac{1}{k}\Box}V^-_{j}(x)\Big|\le C_2 {2^{\epsilon j}} ,
\end{align*}
and we can conclude that on $[\tfrac{1}{16},\tfrac{1}{4}]\times[-\tfrac{1}{16},\tfrac{1}{16}]$,
$$
|e^{i\frac{1}{k}\Box}[V]|\ge |e^{i\frac{1}{k}\Box}[V_j^+]| - \Big|\sum_{{\ell}\neq j} e^{i\frac{1}{k}\Box}[V^\pm _{\ell}] +  e^{i\frac{1}{k}\Box}V^-_{j}(x)\Big|
\ge C_12^{j(1-\beta)}-C_22^{j\epsilon},
$$
which diverges as $j$ tends to infinity. Considering forty-five degree rotations of the $V_j$, which are Schwartz functions, via the pointwise equality, this yields
$$
|\T_1^{k}[V]|\ge |\T_1^{k}[V_j^+]| - \Big|\sum_{{\ell}\neq j}\T_1^{k}[V^\pm_{\ell}] +\T_1^{k}[V^-_{j}] \Big| \ge C_12^{j(1-\beta)}-C_22^{j\epsilon}
$$
on a forty-five degree rotation of $[\tfrac{1}{16},\tfrac{1}{4}]\times[-\tfrac{1}{16},\tfrac{1}{16}]$, so that $|\T_1^{k}[V]|$ diverges as $k$ tends to infinity.
Thus, by Theorem~\ref{Remainder}, combined with Alessandrini's identity \eqref{ale},
\begin{align*}\Big\{ x \,:\, \tfrac{k}{4\pi} \Big\langle (\Lambda_V-\Lambda_0)[u_{{k,x}}|_{\partial\D}], e^{i {\overline{\psi}}}|_{\partial\D} \Big\rangle\not\to V(x)\ \ \text{as}\ \ k\to\infty\Big\}
\end{align*}
contains a forty-five degree rotation of $[\tfrac{1}{16},\tfrac{1}{4}]\times[-\tfrac{1}{16},\tfrac{1}{16}]$, which has nonzero Lebesgue measure.\hfill$\Box$

\vspace{1em}

Note that this result is stable in the sense that $k\in\mathbb{N}$ can be replaced by any sequence $\{n_k\}_{k\in\mathbb{N}}$ satisfying $n_{k}\in[k,k+1)$.

\begin{rem} In  \cite{Sj}, Sj\"olin asked for which values of $s$ is it true that $$\lim_{k\to\infty} e^{i\frac{1}{k}\Delta} f(x)=0, \quad \text{a.e.}\ x\in\R^d \backslash (\text{supp} f),$$for all $f\in H^s$. In principle, this question could have stronger positive results and weaker negative results than Carleson's question: for which values of $s$ is it true that $$\lim_{k\to\infty} e^{i\frac{1}{k}\Delta} f(x)=f(x), \quad \text{a.e.}\ x\in\R^d,$$for all $f\in H^s$?
Indeed, before Bourgain's recent breakthrough \cite{Bo}, Sj\"olin
proved a stronger positive result for his question than what was
known for Carleson's question in three dimensions. Here we solve
Sj\"olin's question completely for the nonelliptic equation in two
dimensions. That is to say,
$$\lim_{k\to \infty} e^{i\frac{1}{k}\Box} f(x)=0, \quad \text{a.e.}\ x\in\R^2 \backslash (\text{supp} f),$$for all $f\in H^s$ if and only if $s\ge 1/2$.
\end{rem}

\appendix

\section{The {\small DN} map from the scattering amplitude}\label{samp}

It is well--known that in the absence of zero Dirichlet eigenvalues there is a unique weak solution to the Dirichlet problem \eqref{dp} that satisfies
 \begin{equation}\label{ref}
 \|u\|_{H^1(\Omega)}\le C\|f\|_{H^{1/2}(\partial \Omega)}
 \end{equation}
(see for example \cite{DKS} - in two dimensions $L^{n/2}(\R^n)$ can be replaced by~$L^2(\R^2)$).
Here $H^{1/2}(\partial \Omega):=H^1(\Omega)/H^1_0(\Omega)$, where $H^1_0(\Omega)$ denotes the closure of
$C^\infty_0(\Omega)$ in $H^1(\Omega)$.
The {\small DN} map $\Lambda_V$ is then defined by
\[ \Big \langle \Lambda_V[f],\psi \Big \rangle = \int_{\partial \Omega} \Lambda_V[f]\,\psi=\int_{\Omega} V u\Psi +\nabla u\cdot \nabla \Psi,\]
for all $\Psi\in H^1(\Omega)$ with $\psi=\Psi+H^1_0(\Omega)$.
When the solution and boundary are sufficiently regular, this
definition coincides with that of the introduction by Green's
formula.
To see that $\Lambda_V$ maps from $H^{1/2}(\partial \Omega)$ to  $H^{-1/2}(\partial \Omega)$, the dual of  $H^{1/2}(\partial \Omega)$, we note that by H\"older's inequality and the Hardy--Littlewood--Sobolev inequality,
\begin{align*}
\Big|\Big \langle \Lambda_V[f],\psi \Big \rangle\Big|&\le \|u\|_{H^1(\Omega)}\|\Psi\|_{H^1(\Omega)}+\|V\|_2\|u\|_{L^4(\Omega)}\|\Psi\|_{L^4(\Omega)}\\
&\le (1+C\|V\|_2)\|u\|_{H^1(\Omega)}\|\Psi\|_{H^1(\Omega)}
\end{align*}
whenever $\Psi\in H^1(\Omega)$, so that by \eqref{ref}, we obtain
\begin{align*}
\Big|\Big \langle \Lambda_V[f],\psi \Big \rangle\Big|&\le C (1+\|V\|_2)\|f\|_{H^{1/2}(\partial\Omega)}\|\psi\|_{H^{1/2}(\partial\Omega)}.
\end{align*}

There are a number of different approaches to showing that the scattering amplitude at a fixed energy $\kappa^2>0$ uniquely determines the {\small DN} map $\Lambda_{V-\kappa^2}$ and {\it vice versa} (see for example \cite{Be, Nachman88, U, SU93a, S}).
Here we follow a constructive argument due to  Nachman \cite[Section 3]{Nachman91}. We must additionally assume that $\kappa^2$ is not a Dirichlet eigenvalue of $-\Delta+V$. This can be  arranged by taking $\Omega$ sufficiently large as the eigenvalues decrease strictly as the domain grows \cite{NPC} (the result of \cite{L} can be extended to $L^2$-potentials using the unique continuation of \cite{JK}). We also additionally suppose that $V$ is real.

Let  $G_{V}$ and $G_{0}$ be the outgoing Green's functions that satisfy
\[(-\Delta+V-\kappa^2)G_{V}(x,y)=\delta(x-y),\quad  (-\Delta-\kappa^2) G_{0}(x,y)=\delta(x-y), \]
and let  $S_{V}$ and $S_{0}$ be the corresponding near-field operators defined via single layer potentials;
\[S_{V}[f](x)=\int_{\partial \Omega}  G_{V}(x,y) f(y) \,dy, \quad S_{0}[f](x)=\int_{\partial \Omega}  G_0(x,y) f(y) \,dy.\]
These are bounded and invertible,  mapping $H^{-1/2}(\partial\Omega)$ to $H^{1/2}(\partial\Omega)$ (the two--dimensional  proof can be found in \cite[Proposition A.1]{IN}). Then Nachman's formula \cite{Nachman88},
\[\Lambda_{V-\kappa^2}=\Lambda_{-\kappa^2}+S_{V}^{-1}-S_{0}^{-1}, \]
allows us to recover the {\small DN} map  on Lipschitz domains.

Thus it remains to recover the single layer potential $S_V$ from the scattering amplitude $A_V$ at energy $\kappa^2$.
For $\omega\in \mathbb{S}^1$, the outgoing scattering solution $v(\cdot,\omega,\kappa)$ is the unique solution to the Lippmann--Schwinger equation
\begin{equation}\label{ls0}
v(y,\omega,\kappa)=e^{i\kappa y \cdot \omega}-\int_{\R^2} G_0(y,z)V(z) v(z,\omega,\kappa )\, dz.\end{equation}
For $(\sigma,\omega)\in \mathbb{S}^1\times \mathbb{S}^1$, the scattering amplitude then satisfies
\begin{equation}\label{f}
A_V(\sigma,\omega,\kappa)=\int_{\R^2} e^{-i\kappa \sigma\cdot z} V(z) v(z,\omega,\kappa)\, dz.
\end{equation}

 When $\Omega$ is a disc, Nachman recovers $S_V$ via formulae given by expansions in spherical harmonics as below. Otherwise he uses a density argument (we remark that Sylvester \cite{S} also invokes density in order to recover). Since we have been obliged to work with $\Omega$ a square, at this point we deviate and instead follow an argument of Stefanov \cite{Stefanov90}, obtaining an explicit formula for the Green's function $G_V$ in terms of $A_V$. Alternatively it seems likely that we could pass to the {\small DN} map on the square from that on the disc via the argument in \cite[Section 6]{Nachman96} for the conductivity problem, but we prefer this more direct approach.

 Stefanov worked in three dimensions, with bounded potentials, and a number of details change in two dimensions, so we present the argument.
 We recover $G_V$ outside of a disc which contains the potential, but which is contained in the domain, so that $S_V$ can be obtained by integrating along the sides of our square~$\Omega$.
 
First we require the following asymptotics. 
\begin{lem}\label{A}
\[G_V(x,y)-G_0(x,y)=\frac{-i}{8\pi\kappa}\frac{e^{i \kappa |x|}}{|x|^\frac12}  \frac{e^{i \kappa |y|}}{|y|^\frac12} A_V\Big(-\frac{x}{|x|},\frac{y}{|y|},\kappa\Big) +o\Big(\frac{1}{|x|^\frac12|y|^\frac12}\Big).\]
\end{lem}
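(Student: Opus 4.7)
The plan is to start from the resolvent identity
\[
G_V(x,y) - G_0(x,y) = -\int_{\R^2} G_0(x,z)\, V(z)\, G_V(z,y)\, dz,
\]
which follows by subtracting the defining equations for $G_V$ and $G_0$ and enforcing the outgoing radiation condition at infinity, and then to substitute large-argument asymptotics for each of the two factors under the integral. Since $V$ is compactly supported, the variable $z$ ranges over a fixed bounded set, so the expansions can be made uniformly.

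First, I would use the explicit formula $G_0(x,y) = \tfrac{i}{4}\, H_0^{(1)}(\kappa|x-y|)$ together with the classical asymptotic $H_0^{(1)}(t) = \sqrt{2/(\pi t)}\,e^{i(t-\pi/4)}\bigl(1+O(1/t)\bigr)$ and the geometric expansion $|x-z| = |x| - \hat{x}\cdot z + O(|z|^2/|x|)$ with $\hat{x}=x/|x|$, to deduce
\[
G_0(x,z) = \frac{e^{i\pi/4}}{\sqrt{8\pi\kappa}}\,\frac{e^{i\kappa|x|}}{|x|^{1/2}}\, e^{-i\kappa\hat{x}\cdot z} + o(|x|^{-1/2})
\]
uniformly for $z$ in compact subsets of $\R^2$.

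Second, I would establish the corresponding asymptotic for $G_V(z,y)$ as $|y|\to\infty$. The same resolvent identity, read now with respect to the second variable, gives $G_V(z,y) = G_0(z,y) - \int G_0(z,w) V(w) G_V(w,y)\,dw$. Dividing through by the prefactor $\tfrac{e^{i\pi/4}}{\sqrt{8\pi\kappa}}\,\tfrac{e^{i\kappa|y|}}{|y|^{1/2}}$ and letting $u_y$ denote the resulting rescaling of $G_V(\cdot,y)$, one obtains an integral equation whose inhomogeneous term, by Step~1, converges uniformly on compact sets to $e^{-i\kappa\hat{y}\cdot z}$. Since $\kappa^2$ is not an eigenvalue of $-\Delta + V$, the operator $\I + G_0 V$ is invertible on the relevant space of compactly supported functions by the Fredholm alternative, so $u_y$ converges to the unique solution of the limiting equation, which is exactly the Lippmann--Schwinger equation~\eqref{ls0} with incident direction $\omega = -\hat{y}$. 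Hence
\[
G_V(z,y) = \frac{e^{i\pi/4}}{\sqrt{8\pi\kappa}}\,\frac{e^{i\kappa|y|}}{|y|^{1/2}}\, v(z,-\hat{y},\kappa) + o(|y|^{-1/2}),
\]
uniformly for $z \in \supp V$.

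Substituting both asymptotics into the resolvent identity, the constants combine as $(e^{i\pi/4})^2/(8\pi\kappa) = i/(8\pi\kappa)$, and the remaining integral is precisely $A_V(\hat{x},-\hat{y},\kappa)$ by the definition \eqref{f}, which yields the claim. The main obstacle is the uniformity in Step~2: one must ensure that the $o(|y|^{-1/2})$ error survives integration against $V$, which rests on the continuous dependence of $u_y$ on its inhomogeneous term via the bounded inverse of $\I + G_0 V$; this boundedness in turn requires the hypothesis (which may be arranged by enlarging $\Omega$) that $\kappa^2$ is not a Dirichlet eigenvalue.
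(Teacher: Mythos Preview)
Your proof is correct and follows essentially the same approach as the paper: the resolvent identity combined with the far-field asymptotics of $G_0$ and $G_V$. The paper uses the identity in the form $G_V-G_0=-\int G_V(x,z)V(z)G_0(y,z)\,dz$ and simply cites the asymptotic $G_V(x,z)\sim c\,|x|^{-1/2}e^{i\kappa|x|}\,v(z,-\hat x,\kappa)$ as well known, which then forces an appeal to the reciprocity symmetry of $A_V$ at the end; your choice of the other form of the identity, together with your sketched derivation of the $G_V$ asymptotic in the second variable via the Lippmann--Schwinger equation, lands directly on $A_V(\hat x,-\hat y,\kappa)$ without that extra step.
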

\begin{proof} It is well--known (see for example (3.66) in \cite{NPT}) that  $G_V$ satisfies
\begin{equation}\label{GV}  G_V(x,z)=  \frac{e^{i\frac{\pi}{4}}}{(8\pi)^{\frac{1}{2}}}  \frac {e^{i \kappa  |x|}}{ \kappa ^\frac{1}{2}|x|^\frac12}  v\Big(z,-\frac{x}{|x|},\kappa\Big) +o\Big(\frac{1}{|x|^{\frac{1}{2}}}\Big),
 \end{equation}
and, in particular,
\begin{equation}\label{kd}
G_0(y,z)=\frac{e^{i\frac{\pi}{4}}}{(8\pi)^{\frac{1}{2}}}  \frac {e^{i \kappa |y|}}{\kappa^{\frac{1}{2}} |y|^\frac12} e^{- i \kappa \frac{y}{|y|} \cdot z } +o\Big(\frac{1}{|y|^{\frac{1}{2}}}\Big).\end{equation}
On the other hand, it is easy to verify that \begin{equation}\label{lsr}G_V(x,y)-G_0(x,y)=-\int_{\R^2} G_V(x,z) V(z) G_0(y,z)\, dz. \end{equation}
Substituting in \eqref{GV} and \eqref{kd},  see that $G_V(x,y)-G_0(x,y)$ is equal to
\[\frac{-i}{8\pi\kappa}   \frac{e^{i \kappa  |x|}}{ |x|^\frac12}  \frac{e^{i \kappa  |y|}}{|y|^\frac12}  \int e^{-i\kappa\frac{y}{|y|}\cdot z} V(z) v\Big(z,-\frac{x}{|x|},\kappa \Big) dz +o\Big(\frac{1}{|x|^\frac12|y|^\frac12}\Big),  \]
so that by \eqref{f} we obtain the result.
\end{proof}

In the following, $J_n$ and $H_n^{(1)}$ denote the Bessel and Hankel functions  of the first kind of $n$th order, respectively (see for example \cite{leb}). We also write $x$ in polar coordinates as $(|x|,\phi_x)$.

\begin{thm} Let $V\in H^s$ with $s>0$ be supported in the disc of radius~$\rho$, centred at the origin, and consider the Fourier series
\[
A_V(\sigma,\omega,\kappa)= \sum_{n\in\mathbb{Z}} \sum _{m\in \mathbb{Z}}  a_{n,m} e^{in \phi_\sigma} e^{i m \phi_\omega}. \]
Then \begin{equation*}\label{explicit}  G_V(x,y)-G_0(x,y)= \!\!\sum_{n\in\mathbb{Z}}\sum_{m\in\mathbb{Z}} \!\!\frac{(-1)^{n} }{16}i^{n+m}a_{n,m} H_n^{(1)}(\kappa |x|) H_m^{(1)} (\kappa |y|) e^{in \phi_x} e^{i m \phi_y},
\end{equation*}
where the series is uniformly, absolutely convergent for $|x|>|y|>R>\frac{3}{2}\rho$.
\end{thm}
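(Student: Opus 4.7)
The plan is to start from the resolvent identity
\[ G_V(x,y) - G_0(x,y) = -\int_{\R^2} G_0(x,z) V(z) G_V(z,y)\, dz, \]
which is a rearrangement of \eqref{lsr} via the symmetry $G_V(z,y) = G_V(y,z)$, and to substitute into it the natural outgoing partial-wave expansions of each Green's function, valid for $z \in \supp V \subset B(0,\rho)$ while $|x|,|y| > R > \rho$. For the free Green's function, Graf's addition theorem applied to $G_0(x,z)=\frac{i}{4}H_0^{(1)}(\kappa|x-z|)$ gives, whenever $|x|>|z|$,
\[ G_0(x,z) = \frac{i}{4}\sum_{n\in\Z} H_n^{(1)}(\kappa|x|) J_n(\kappa|z|) e^{in(\phi_x - \phi_z)}. \]
For the perturbed Green's function I use that, for fixed $z\in\supp V$, the function $y\mapsto G_V(z,y)$ satisfies the homogeneous Helmholtz equation in $\{|y|>\rho\}$ with the outgoing Sommerfeld radiation condition, and so admits an expansion
\[ G_V(z,y) = \sum_{m\in\Z} c_m(z)\, H_m^{(1)}(\kappa|y|) e^{im\phi_y},\qquad |y|>\rho. \]

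To identify $c_m(z)$ I match the large-$|y|$ asymptotics of this series, using $H_m^{(1)}(\kappa|y|)\sim\sqrt{2/(\pi\kappa|y|)}\, e^{i(\kappa|y| - m\pi/2 - \pi/4)}$, against the Lippmann--Schwinger asymptotic~\eqref{GV}. Writing the scattering solution as $v(z,\omega,\kappa)=\sum_m v_m(z) e^{im\phi_\omega}$ and noting that the direction in \eqref{GV} is $-\hat y$ (which replaces $\phi_\omega$ by $\phi_y+\pi$ and produces a factor $(-1)^m$), a short comparison of Fourier coefficients gives $c_m(z)=\tfrac{(-1)^m i^{m+1}}{4}v_m(z)$. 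Substituting both expansions into the integral and interchanging summation with integration, the coefficient of $H_n^{(1)}(\kappa|x|) H_m^{(1)}(\kappa|y|) e^{in\phi_x} e^{im\phi_y}$ reduces to a phase times $\int J_n(\kappa|z|) e^{-in\phi_z} V(z) v_m(z)\, dz$. The Jacobi--Anger expansion $e^{-i\kappa\sigma\cdot z}=\sum_n(-i)^n J_n(\kappa|z|) e^{in(\phi_\sigma-\phi_z)}$, inserted into the defining formula~\eqref{f} of $A_V$, identifies this integral with $i^n a_{n,m}$. Collecting the factors $-1$, $\tfrac{i}{4}$, $\tfrac{(-1)^m i^{m+1}}{4}$ and $i^n$ then produces the claimed constant $\tfrac{(-1)^m}{16} i^{n+m}$.

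For the uniform absolute convergence on $\{|x|>|y|>R>\rho\}$, I rely on the large-order bounds $|J_n(r)|\ls (r/2)^{|n|}/|n|!$ and $|H_n^{(1)}(r)|\ls (|n|-1)!(2/r)^{|n|}/\sqrt{|n|}$. For $|z|\le\rho<|x|$ these give a term-by-term bound of order $(\rho/|x|)^{|n|}/|n|$ in the Graf series, and hence geometric convergence; the analogous bound in the $y$-series follows once one invokes the analyticity of $v(z,\cdot,\kappa)$ on the unit circle, transferred from that of $e^{i\kappa z\cdot\omega}$ via the Lippmann--Schwinger equation~\eqref{ls0}, which yields the factorial decay $|v_m(z)|\ls(\kappa\rho)^{|m|}/|m|!$. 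These bounds justify each exchange of summation and integration and control the double series uniformly on the stated set. The main obstacle will be the bookkeeping of phases: the appearance of $-\hat y$ in \eqref{GV} is the source of the non-obvious factor $(-1)^m$ in the final formula, and the powers of $i$ produced by the Hankel asymptotics must be tracked carefully; all the other steps reduce to standard Green's-function arithmetic.
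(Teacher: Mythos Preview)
Your argument is correct and follows a genuinely different route from the paper. The paper iterates the resolvent identity once more, writing $G_V-G_0=-I_1+I_2$ with
\[
I_1=\int G_0(x,z)V(z)G_0(z,y)\,dz,\qquad
I_2=\int\!\!\int G_0(x,z_1)V(z_1)G_V(z_1,z_2)V(z_2)G_0(y,z_2)\,dz_2\,dz_1,
\]
so that only free Green's functions carry the far variables $x,y$. It then expands each $G_0$ by Graf's formula, proves absolute convergence of the resulting double series from the Bessel bound $|J_n(r)|\le C_\rho(\rho/2)^{|n|}/|n|!$, the Hankel bound $|H_n^{(1)}(r)|\le C_R|n|!(2/R)^{|n|}$, and the crude estimate $\|G_V\|_{L^2(B_\rho\times B_\rho)}<\infty$, and only afterwards identifies the coefficients by matching the large-$|x|,|y|$ asymptotics against Lemma~\ref{A}. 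Your approach instead keeps a single resolvent identity, expands $G_V(z,\cdot)$ directly in outgoing partial waves, and reads off the coefficients $c_m(z)$ from \eqref{GV}; this is more direct and makes the bookkeeping of phases cleaner. The price is that your convergence argument needs the factorial decay $|v_m(z)|\ls(\kappa\rho)^{|m|}/|m|!$, which does \emph{not} follow merely from ``analyticity on the unit circle'' (that would only give geometric decay, insufficient against the factorial growth of $H_m^{(1)}$). The correct justification is the one you hint at: take Fourier coefficients in $\omega$ of the Lippmann--Schwinger equation \eqref{ls0} to get $v_m=(\I+K)^{-1}\big[i^mJ_m(\kappa|\cdot|)e^{im\phi_\cdot}\big]$ with $K=(-\Delta-\kappa^2-i0)^{-1}V$, and then the Bessel bound on the data together with boundedness of $(\I+K)^{-1}$ on a suitable weighted space gives the factorial rate. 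With that point made precise, both proofs are complete; the paper's second iteration simply trades this mapping estimate for the more elementary $L^2$ bound on $G_V$ restricted to $B_\rho\times B_\rho$.
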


\begin{proof}
We can expand $G_0(x,y)=\frac{i}{4}H_0^{(1)}(\kappa|x-y|)$ as
\[G_0(x,y)= \frac{i}{4}\Big(H_0^{(1)}(\kappa |x|) J_0(\kappa|y|)+
2\sum_{n\ge1} H_n^{(1)}(\kappa |x|) J_n(\kappa|y|)\cos(\phi_x-\phi_y)\Big), \]
(see for example \cite[Section 3.4]{ColtonKressbook92} or \cite[Theorem 3.4]{ruiz}). As $H_{-n}^{(1)}=(-1)^nH_n^{(1)}$ and $J_{-n}=(-1)^{n} J_n$, in order to separate variables it will be convenient to write this as
$$
G_0(x,y)=\frac{i}{4}\sum_{n\in\mathbb{Z}} H_n^{(1)}(\kappa |x|) J_n(\kappa|y|)e^{in \phi_x} e^{-i n\phi_y}.
$$
As before, it is easy to check that
\begin{equation*}\label{ls}G_V(x,y)-G_0(x,y)=-\int_{\R^2} G_0(x,z) V(z) G_V(z,y)\, dz, \end{equation*}
and so substituting  \eqref{lsr} into this we obtain $G_V-G_0=-I_1+I_2$, where
\begin{align*}
I_1&=\int G_0(x,z) V(z)G_0(z,y)\, dz\\
I_2&=\int G_0(x,z_1)V(z_1) \int G_V(z_1,z_2) V(z_2) G_0(y,z_2)\, dz_2dz_1.
\end{align*}
Now in both integrals we introduce the expansion of $G_0$ (note that $G_0(x,y)=G_0(y,x)$), extracting the terms independent of $z,z_1,z_2$. In this way we get
\begin{align}
I_1&=-\frac{1}{16}\sum_{n\in\mathbb{Z}}  \sum_{m\in\mathbb{Z}} \alpha_{n,m}H_n^{(1)}(\kappa |x|)  H^{(1)}_m(\kappa|y|)e^{in \phi_x} e^{i m \phi_y},\label{non0}\\
I_2&= -\frac{1}{16}\sum_{n\in \mathbb{Z}}  \sum_{m\in \mathbb{Z}} \beta_{n,m}H_n^{(1)}(\kappa |x|)  H^{(1)}_m(\kappa|y|) e^{in \phi_x} e^{i m \phi_y},\label{non}
\end{align}
where
\begin{align*}
\alpha_{n,m}&= \int_{\R^2} V(z) J_n(\kappa|z|) J_m(\kappa|z|)e^{-i(n+m) \phi_z}\,dz,\\
\beta_{n,m}&=\int_{\R^4} J_n(\kappa|z_1|)  V(z_1)G_V(z_1,z_2) V(z_2)  J_m(\kappa|z_2|) e^{-in\phi_{z_1}}e^{-im\phi_{z_2}}\,dz_1dz_2 .
\end{align*}
It remains to show that the sums \eqref{non0} and \eqref{non} converge uniformly and absolutely for $|x|>|y|>R>\frac{3}{2}\rho$. Once we know that this  is the case, we can take limits and use the asymptotics of the Hankel functions for
large~$r$;
\[H_n^{(1)}(r) = e^{-i(n\frac{\pi}{2}+\frac{\pi}{4})}\Big(\frac{2}{\pi r}\Big)^\frac12    e^{ir} + o\Big(\frac{1}{r^\frac{1}{2}}\Big) \]
(see for example \cite[Section 5.16]{leb}),
and then Lemma~\ref{A} tells us that $$ -\frac{1}{16}(-i)^{n+m+1}\frac{2}{\pi}(\beta_{n,m}-\alpha_{n,m})=-i\frac{(-1)^n}{8\pi}a_{n,m}.$$

To see that the sums converge note that, by H\"older's inequality, we have
\begin{align*} |\alpha_{n,m}|&\le C_\rho\|V\|_{L^2}\|J_n(\kappa|\cdot|)\|_{L^\infty(B_\rho)} \|J_m(\kappa|\cdot|)\|_{L^\infty(B_\rho)},\\
|\beta_{n,m}|  &\le \|G_V\|_{L^2(B_\rho\times B_\rho)}   \|V\|^2_{L^2}\|J_n(\kappa|\cdot|)\|_{L^\infty(B_\rho)} \|J_m(\kappa|\cdot|)\|_{L^\infty(B_\rho)}.\end{align*}
At this point we deviate from \cite{Stefanov90} as there seems to be less local knowledge regarding $G_V$ in two dimensions. Instead we can rewrite \eqref{lsr} as
$$
G_V(\cdot,y)=G_0(\cdot,y)- (-\Delta+V-\kappa^2-i0)^{-1}[VG_0(\cdot,y)],
$$
and use that the resolvent is bounded from $L^2((1+|\cdot|^2)^{\delta})$ to $L^2((1+|\cdot|^2)^{-\delta})$ with $\delta>1/2$ (see \cite[Theorem 4.2]{agmon}). Thus, using that $V$ is compactly supported, and taking $\frac{1}{2}=\frac{1}{p}+\frac{1}{q}$ with large $p$ so that $1-\frac{2}{q}=s$,
\begin{align*}
\|G_V(\cdot,y)\|_{L^2(B_\rho)}&\le \|G_0(\cdot,y)\|_{L^2(B_\rho)}+ C_\rho\|VG_0(\cdot,y)\|_{L^2(B_\rho)}\\
&\le \|G_0(\cdot,y)\|_{L^2(B_\rho)}+ C_\rho\|V\|_q\|G_0(\cdot,y)\|_{L^p(B_\rho)}\\
&\le \|G_0(\cdot,y)\|_{L^2(B_\rho)}+ C_\rho\|V\|_{H^{s}}\|G_0(\cdot,y)\|_{L^p(B_\rho)},
\end{align*}
by the Hardy--Littlewood--Sobolev inequality. Integrating again with respect to $y$, and recalling that the singularity of $H_0^{(1)}$ at the origin is  logarithmic, we see that $\|G_V\|_{L^2(B_\rho\times B_\rho)}\le C$.
Then, using the Taylor series expansion for the Bessel function,
$$
|J_{n}(r)|=\Big|\sum_{j\ge 0} \frac{(-1)^j}{j!(|n|+j)!}\Big(\frac{r}{2}\Big)^{2j+|n|}\Big|\le C_{\rho}\frac{1}{|n|!}\Big(\frac{\rho}{2}\Big)^{|n|},\quad 0\le r\le \rho,$$
we see that
\begin{align*} |\alpha_{n,m}|&\le C_{\rho}\|V\|_{L^2}\frac{1}{|n|!}\Big(\frac{\rho}{2}\Big)^{|n|}\frac{1}{|m|!}\Big(\frac{\rho}{2}\Big)^{|m|},\\
|\beta_{n,m}|  &\le C_{\rho}(1+\|V\|^3_{H^s})\frac{1}{|n|!}\Big(\frac{\rho}{2}\Big)^{|n|}\frac{1}{|m|!}\Big(\frac{\rho}{2}\Big)^{|m|}.\end{align*}
Finally, we require the Hankel function estimate,
$$
|H^{(1)}_{n}(r)|\le C_R|n|!\Big(\frac{3}{R}\Big)^{|n|},\quad R\le r,
$$
which is proven in \cite[Lemma 2.3]{AFR}.
The sums \eqref{non0} and \eqref{non} are then bounded by  a constant multiple of
\[\sum_{n\ge 0}\sum_{m\ge 0} \Big(\frac{3\rho}{2R}\Big)^{n}\Big(\frac{3\rho}{2R}\Big)^{m} \]
which is convergent when $|x|>|y| >R>\frac{3}{2}\rho$, and so we are done.
\end{proof}

\vspace{1em}

The authors thank Juan Antonio Barcel\'o, Victor Arnaiz, Antonio C\'ordoba, Adrian Nachman, Gen Nakamura, Alberto Ruiz, Chris Ruiz, Mikko Salo, Plamen Stefanov and Jorge Tejero for helpful (electronic) conversations.

\end{document}